\def\black{\color{black}}
\def\id{\operatorname{Id}}
\newtheorem{theorem}{Theorem}[section]
\newtheorem{proposition}{Proposition}[section]
\newtheorem{definition}{Definition}[section]
\newtheorem{lemma}{Lemma}[section]
\newtheorem{remark}{Remark}[section]
\begin{document}
\title{Transplanting geometrical structures}
\author{Y. Euh, P. Gilkey, J.H. Park, and K. Sekigawa}
\address{YE and JHP: Department of Mathematics, Sungkyunkwan University, Suwon
440-746, Korea \\ and the Korea Institute for Advanced Study, Seoul
130-722, Korea} \email{prettyfish@skku.edu and parkj@skku.edu}
\address{PG: Mathematics Department, University of Oregon, Eugene OR 97403 USA}
\email{gilkey@uoregon.edu}
\address{KS: Department of Mathematics, Niigata University, Niigata, Japan.}
\email{sekigawa@math.sc.niigata-u.ac.jp}
\begin{abstract}{We say that a germ $\mathcal{G}$ of a geometric structure can be
transplanted into a manifold $M$ if there is a suitable geometric structure on $M$
which agrees with $\mathcal{G}$ on a neighborhood of some point $P$ of $M$. We show for
a wide variety of geometric structures that this transplantation is always possible provided
that $M$ does in fact admit some such structure of this type. We use this result to show
that a curvature identity which holds in the category of compact
manifolds admitting such a structure holds for germs as well and
we present examples illustrating this result. We also use this result to show geometrical realization
problems which can be solved for germs of structures can in fact be solved in the compact setting as well.
\\MSC 2010: 53B20 and 53B35.
\\Keywords: Affine,  almost complex, almost para-complex,
almost Hermitian, almost para-Hermitian, Hermitian, para-Hermitian, K\"ahler, Weyl,
K\"ahler--Weyl, para-K\"ahler, para-K\"ahler--Weyl,
Riemannian, pseudo-Riemannian.}\end{abstract}

\maketitle
\section{Introduction}
We shall consider the following geometric structures; precise definitions will be given in Section~\ref{sect-2}.
We fix the dimension $m$ of the underlying manifold and also the signature $(p,q)$ where relevant.

\begin{definition}\label{defn-1.1}
\rm Consider the
possible structures:\begin{enumerate}
\item Affine structures.
\item Pseudo-Riemannian structures of suitable signature.\item Almost (para)-complex structures in dimension $m=2\bar m$.
\item Almost (para)-Hermitian structures of suitable signature.
\item (Para)-complex  structures in dimension $m=2\bar m$.
\item (Para)-Hermitian structures of suitable signature.
\item (Para)-K\"ahler structures of suitable signature.
\item Weyl structures of suitable signature.

\item (Para)-K\"ahler--Weyl structures of suitable signature.
\end{enumerate}\end{definition}

Let $\mathcal{S}$ be a structure from this list, let $\mathcal{G}\in\mathcal{S}$ be the germ of such a structure, and
let $M$ be a smooth manifold which admits a structure $S_M\in\mathcal{S}$.
Fix a point $P\in M$. We say that $\mathcal{G}$ can be {\it transplanted} in $M$
if there exists a structure $\tilde S_M\in\mathcal{S}$
such that $\tilde S_M$ near $P$ is locally isomorphic to $\mathcal{G}$
and $\tilde S_M$ agrees with $S_M$ away from $P$. We shall make this
more precise in Section~\ref{sect-2}-- it is necessary
to assume that $M$ admits such a structure
to avoid topological difficulties. For example,
if $\mathcal{S}$ denotes the structure of Lorentzian metrics of
signature $(1,m-1)$, then not every manifold will admit such a structure; similarly, if we are working with
almost complex structures, not every manifold admits an almost complex structure.

Such problems arise in many contexts.  One can often establish universal curvature identities
for compact manifolds by considering the Euler-Lagrange equations of certain characteristic
classes -- see, for example, results in \cite{EPSx, EPS2,EPS3,EPS4,GPS,LPS}. One then wants to show these identities hold more generally
and this involves a transplanting problem. Similarly, when establishing geometric realization results,
one often constructs examples which are only defined on
a small neighborhood of the origin -- the discussion in \cite{BGN12} provides a nice summary
of these problems and we refer to other results in \cite{BGM10, BGMR10,GN12, GNS11}.  And one
wants to then deduce these geometric realization results also hold in the compact setting.

\subsection{Transplantation} The following is the first main result of this paper; a more precise statement for each of the structures
in Definition~\ref{defn-1.1} will be given subsequently in Section~\ref{sect-2}.

\begin{theorem}\label{thm-1.1}
All the structures of Definition~\ref{defn-1.1} can be transplanted.\end{theorem}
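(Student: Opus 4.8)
The plan is to reduce the whole statement to a gluing problem inside a single coordinate chart and to exploit two soft devices. Fix a chart centred at $P$; then $S_M$ is represented on the unit ball $B_1\subset\mathbb{R}^m$ by some data $s_1$, and, after shrinking the domain of the germ, $\mathcal{G}$ is represented on a small ball $B_r$ by data $s_0$. One must produce data $\tilde s$ on $B_1$ that is locally isomorphic to $s_0$ near the origin and that equals $s_1$ near $\partial B_1$; transporting $\tilde s$ back to $M$ and leaving $\tilde S_M=S_M$ outside the chart then proves the theorem, since $M$ is assumed to carry some structure $S_M$ of the given type. The first device is \emph{normalization at $P$}: the pointwise value of a structure (and, where needed, a finite jet) may be put into a fixed flat normal form by a linear change of coordinates, because the relevant structure group acts transitively on pointwise models --- any two nondegenerate symmetric bilinear forms of a fixed signature, any two (para\nobreakdash-)complex structures on $\mathbb{R}^m$, and so on, are linearly conjugate. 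The second device is \emph{shrinking}: once $s_0$ and $s_1$ have the same value at $P$, equal to a fixed constant-coefficient model, they are both $C^0$-close to that model on a sufficiently small ball; since the defining inequalities (nondegeneracy, prescribed signature, positivity of a $(1,1)$-form) are \emph{open} and are inherited by convex combinations of objects close to a model on which they hold, interpolation by a cut-off function then stays inside the structure class.

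Affine connections require only a trivial instance of this: if $\nabla_0,\nabla_1$ are the two connections and $A=\nabla_1-\nabla_0$ is their difference tensor near $P$, then $\tilde\nabla:=\nabla_1-\chi A$, with $\chi\equiv1$ near $P$ and $\operatorname{supp}\chi$ small, transplants $\mathcal{G}$ with no obstruction. For pseudo-Riemannian metrics, and likewise for conformal structures, one normalizes the value at $P$ to a fixed model $g_{00}$, takes $r$ so small that $g_0,g_1$ are both $C^0$-close to $g_{00}$ on $B_r$, and sets $\tilde g=\chi g_0+(1-\chi)g_1$; this is nondegenerate of the correct signature throughout. For almost (para\nobreakdash-)complex structures one normalizes at $P$ to a fixed $J_{00}$; since the (para\nobreakdash-)complex structures form a submanifold of $\operatorname{End}(\mathbb{R}^m)$ near $J_{00}$, it admits a smooth local retraction $\pi$, and one puts $\tilde J=\pi\bigl(\chi J_0+(1-\chi)J_1\bigr)$ on a ball small enough that the convex combination stays in the domain of $\pi$. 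For almost (para\nobreakdash-)Hermitian structures one first transplants $J$ in this way and then applies the ($\tilde J$\nobreakdash-)Hermitian averaging operator $h\mapsto\tfrac12\bigl(h\pm h(\tilde J\,\cdot\,,\tilde J\,\cdot\,)\bigr)$ to $\chi g_0+(1-\chi)g_1$: compatibility with $\tilde J$ is a linear condition, the operator fixes a metric already compatible with $\tilde J$, and nondegeneracy and signature survive by the shrinking device.

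For the integrable structures the complex structure itself need not be changed. By the Newlander--Nirenberg theorem, and by the Frobenius theorem in the para-complex case, every germ of a (para\nobreakdash-)complex structure is locally isomorphic to the flat model, so for structure~(5) one simply takes $\tilde S_M=S_M$; for (para\nobreakdash-)Hermitian structures one writes both $\mathcal{G}$ and $S_M$ near $P$ in coordinates adapted to the complex structure of $S_M$, which reduces matters to interpolating two metrics compatible with one \emph{fixed} $J$ --- again a linear constraint, handled as before. The (para\nobreakdash-)K\"ahler case adds the closedness of the K\"ahler form: after the same reduction and after normalizing so that $\omega_0$ and $\omega_1$ agree at $P$, one invokes the local $\partial\bar\partial$-lemma (and its para-complex analogue) to write $\omega_0-\omega_1=dd^c\psi$ near $P$ with $\psi$ vanishing to second order at $P$, and sets $\tilde\omega:=\omega_1+dd^c(\chi\psi)$, extended by $\omega_1$; this form is globally defined, closed, of type $(1,1)$, equals $\omega_0$ where $\chi\equiv1$, and, since $\|\chi\psi\|_{C^2}\to0$ as $r\to0$ once $\psi$ vanishes to second order, is nondegenerate of the correct signature on $B_r$. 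Recovering $\tilde g$ from $(\tilde\omega,J)$ completes this case. Finally, a Weyl structure is a conformal class together with a torsion-free connection whose associated Lee $1$-forms form an affine space over $\Omega^1$, so one transplants the conformal class as above and then the $1$-form part affinely, with no further obstruction; (para\nobreakdash-)K\"ahler--Weyl structures combine this with the transplantation of $J$ and of the compatible metric data.

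The genuinely delicate points are, in the indefinite cases, the control of the signature under interpolation --- which is what forces the normalization at $P$ and the shrinking device in place of a naive partition-of-unity argument --- and, in the (para\nobreakdash-)K\"ahler case, the simultaneous preservation of $d\omega=0$ and of nondegeneracy, which is why one interpolates a potential subject to a second-order vanishing condition rather than the K\"ahler forms themselves.
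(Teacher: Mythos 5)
Your overall architecture is the paper's: normalize the pointwise (and, where possible, first--order) data by a linear change of coordinates \emph{before} identifying the two base points, shrink the ball so that openness of the defining conditions saves nondegeneracy, glue with a mesa function, and in the (para)-K\"ahler case interpolate a potential rather than the forms themselves. However, the quantitative claim on which your K\"ahler step rests is false as stated. If the potential $\psi$ merely vanishes to second order at $P$, i.e.\ $\psi=O(|x|^2)$, then $dd^c(\chi_r\psi)$ contains the term $(\partial^2\chi_r)\cdot\psi$, which on the transition annulus has size $r^{-2}\cdot r^{2}=O(1)$; so $\|\chi_r\psi\|_{C^2}$ does \emph{not} tend to $0$ as $r\to0$, and nondegeneracy of $\tilde\omega=\omega_1+dd^c(\chi_r\psi)$ on the annulus is not guaranteed. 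To repair this you must justify a higher order of vanishing: since $\omega_0(P)=\omega_1(P)$, the degree-$2$ Taylor term of $\psi$ satisfies $dd^c\psi_2=\Delta_0=0$ and may be discarded along with $\psi_0,\psi_1$, giving $\psi=O(|x|^3)$ and hence $(\partial^2\chi_r)\psi=O(r)$, which suffices for the $C^0$ control needed here. The paper goes one step further: using the K\"ahler normal coordinates of Proposition~\ref{prop-2.1}(4) it kills the first derivatives of \emph{both} metrics at $P$, so the potential can be taken $O(|x|^4)$ and one obtains the $C^1$ estimate $\|\tilde g_2-g_2\|^1=O(r)$ required later for geodesic completeness (Theorem~\ref{thm-2.10}). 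So the idea is right, but the vanishing order you impose is off by at least one, and the argument for why the potential can be so chosen is exactly the Taylor-expansion step you omit.

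Two smaller points of comparison. For almost (para)-complex structures you project the convex combination $\chi J_0+(1-\chi)J_1$ back onto the submanifold $\{J^2=\pm\id\}$ by a local retraction; the paper instead builds adapted frames $\vec e,\vec f$, writes $\vec f=\Theta\vec e$, rescales the argument of $\Theta$ by the cut-off, and defines $\tilde J$ on the resulting frame. Both are correct; the retraction is softer, the frame construction is explicit and keeps the $C^0$ estimate elementary. For Weyl structures, interpolating the connections directly fails on the transition annulus because neither $\nabla_1$ nor $\nabla_2$ is compatible with the interpolated metric $\tilde g$ there; your ``affine interpolation of the $1$-form part'' works only if you reconstruct $\tilde\nabla$ from the pair $(\tilde g,\tilde\omega)$ by the explicit Koszul-type formula, which is a legitimate (and arguably cleaner) alternative to the paper's device of inserting the Levi-Civita connection of $\tilde g$ on a buffer annulus via a three-set partition of unity. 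Finally, for (para)-K\"ahler--Weyl structures the reduction to the conformally-K\"ahler case for $m\ge6$ and the special $m=4$ statement are nontrivial inputs from the literature that your one-line ``combine'' glosses over.
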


\subsection{Curvature identities}
Theorem~\ref{thm-1.1} then yields the second main result of this paper which
motivated our investigations in the first instance; we will present several applications
 in Section~\ref{sect-3}:

\begin{theorem}\label{thm-1.2}
Let $\mathcal{S}$ be a structure of Definition~\ref{defn-1.1}.
A curvature identity which holds for every $\xi\in\mathcal{S}(M)$
for $M$  compact necessarily holds without the assumption of compactness.\end{theorem}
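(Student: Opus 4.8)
The plan is to deduce Theorem~\ref{thm-1.2} from Theorem~\ref{thm-1.1} by exploiting the purely local nature of curvature identities. By a curvature identity we understand a universal relation asserting that a fixed polynomial (or, more generally, invariantly defined) expression in the components of the curvature tensor, the defining tensor fields of the structure, and their covariant derivatives up to some fixed order vanishes, or takes a prescribed value, when evaluated at an arbitrary point. Such an assertion is pointwise, so it holds for a structure $\xi$ on a manifold $N$ if and only if it holds for the germ $\xi_Q$ of $\xi$ at every point $Q\in N$; moreover its validity at a point is unchanged under local isomorphism. Thus it suffices to show: if a curvature identity holds for every $\xi\in\mathcal{S}(M)$ with $M$ compact, then it holds for the germ of every structure in $\mathcal{S}$.

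The first step is to fix, for the relevant dimension $m$ and signature $(p,q)$ where it enters, a single compact manifold $M_0$ carrying a structure $S_0\in\mathcal{S}$. This is where the only genuine case-checking occurs, but it is elementary: suitable flat tori handle all nine cases of Definition~\ref{defn-1.1}. One takes $T^m$ with its flat affine connection in case~(1); $T^m=T^p\times T^q$ with a flat metric of signature $(p,q)$ in case~(2); the standard flat (para-)complex torus of dimension $m=2\bar m$, together with its flat compatible (para-)Hermitian metric --- which is in fact (para-)K\"ahler --- in cases~(3)--(7); and a flat pseudo-Riemannian metric, regarded with vanishing Weyl one-form, gives a Weyl structure in case~(8) and, in the (para-)K\"ahler situation, a (para-)K\"ahler--Weyl structure in case~(9). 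In every instance $M_0$ is compact and $S_0\in\mathcal{S}(M_0)$.

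Now let $\mathcal{G}\in\mathcal{S}$ be an arbitrary germ based at a point; equivalently, pick any point $Q$ of any (possibly non-compact) manifold $N$ admitting a structure $\xi\in\mathcal{S}(N)$ and set $\mathcal{G}=\xi_Q$. Apply Theorem~\ref{thm-1.1} to transplant $\mathcal{G}$ into $M_0$: there are a point $P\in M_0$ and a structure $\tilde S_{M_0}\in\mathcal{S}(M_0)$ which on a neighborhood of $P$ is locally isomorphic to $\mathcal{G}$ and which agrees with $S_0$ away from $P$. Since $M_0$ is compact, the hypothesis applies to $\tilde S_{M_0}$, so the curvature identity holds at every point of $M_0$, in particular at $P$. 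Because the identity is pointwise and invariant under local isomorphism, its validity at $P$ for $\tilde S_{M_0}$ is equivalent to its validity for $\mathcal{G}$, hence to its validity for $\xi$ at $Q$. As $Q$ and $\xi$ were arbitrary, the identity holds for every structure in $\mathcal{S}$ with no compactness assumption, which is the claim.

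The substantive content is entirely carried by Theorem~\ref{thm-1.1}; granting that, the present argument is a soft localization argument. The only point requiring care --- and it is routine --- is the explicit production of a compact model $M_0$ in each of the nine cases with the correct dimension and signature, so that the transplantation theorem has a nonempty target to transplant into; once this is in place no further estimates or computations are needed.
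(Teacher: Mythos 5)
Your argument is correct and is precisely the (implicit) argument of the paper, which states Theorem~\ref{thm-1.2} as an immediate consequence of Theorem~\ref{thm-1.1}: transplant the given germ into a compact model admitting the structure, invoke the identity there, and use that curvature identities are pointwise and invariant under local isomorphism. Your explicit verification that flat tori furnish compact models in all nine cases is a useful detail the paper leaves unstated, but it does not change the route.
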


\subsection{Geometric realizability}
Theorem~\ref{thm-1.1} and results described in
\cite{BGN12} also yield the following result which formed part of the motivation of our paper; we will
present several examples illustrating this result in Section~\ref{sect-4}:

\begin{theorem}\label{thm-1.3}
Every algebraic model of the curvature tensor of a structure from Definition~\ref{defn-1.1} is geometrically realizable by a compact
manifold.
\end{theorem}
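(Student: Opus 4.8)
The plan is to combine the germ-level geometric realization theorems recalled in \cite{BGN12} (and established in \cite{BGM10, BGMR10, GN12, GNS11}) with the transplantation principle of Theorem~\ref{thm-1.1}. Fix a structure type $\mathcal{S}$ from Definition~\ref{defn-1.1}, together with the relevant dimension $m$ and, where applicable, the signature $(p,q)$, and let $\mathfrak{M}$ be an algebraic model of the curvature tensor of such a structure in the sense to be made precise in Section~\ref{sect-2}. First I would invoke those local realization results to produce a germ $\mathcal{G}\in\mathcal{S}$, say at the origin of $\mathbb{R}^m$, whose associated algebraic curvature model agrees with $\mathfrak{M}$ under a suitable linear isomorphism; the cited references establish exactly such a germ realization for each of the structures in Definition~\ref{defn-1.1}.

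Next I would exhibit a compact manifold $M$ of dimension $m$ (and of the prescribed signature where relevant) admitting a structure $S_M\in\mathcal{S}$. This is a short case analysis. A flat product torus $T^m$, suitably split and equipped with a flat connection, a flat split pseudo-Riemannian metric, a flat (para-)complex structure, or the trivial Weyl structure attached to such a metric, handles the affine, pseudo-Riemannian, Weyl, almost para-complex, para-complex, almost para-Hermitian, para-Hermitian and para-K\"ahler cases for every admissible signature. For the almost complex, complex, Hermitian and K\"ahler cases one may take a complex torus $\mathbb{C}^{\bar m}/\Lambda$, equipped in the metric cases with a flat Hermitian metric of the prescribed signature, or $\mathbb{CP}^{\bar m}$ with its standard structure in the definite case. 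Finally, for (para-)K\"ahler--Weyl structures the (para-)K\"ahler examples just listed suffice in dimension $m\ge 6$, where such structures are automatically (para-)K\"ahler, while in dimension $m=4$ one invokes the known compact (para-)K\"ahler--Weyl examples. In every instance a compact model of the required type exists.

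Then I would apply Theorem~\ref{thm-1.1} to transplant $\mathcal{G}$ into $M$ at a chosen point $P$, obtaining a structure $\tilde S_M\in\mathcal{S}$ on the compact manifold $M$ which is locally isomorphic to $\mathcal{G}$ near $P$ and coincides with $S_M$ away from $P$. Since the algebraic curvature model of a structure at a point depends only on the germ of the structure there, the curvature model of $\tilde S_M$ at $P$ is isomorphic to that of $\mathcal{G}$ at the origin, hence to $\mathfrak{M}$. Thus $(M,\tilde S_M)$ geometrically realizes $\mathfrak{M}$ at $P$, which is the assertion.

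The hard part will be the second step, the construction of the compact carrier manifold $M$: the transplantation theorem can be invoked only once we already possess a compact manifold admitting a structure of the given type, so the entire argument rests on having such a compact example for each of the nine structure types and each admissible dimension and signature. The delicate cases are the para-complex, para-Hermitian, para-K\"ahler and K\"ahler--Weyl families, together with their signature restrictions, but flat tori, complex projective spaces, and the known four-dimensional (para-)K\"ahler--Weyl manifolds supply what is needed in each of them.
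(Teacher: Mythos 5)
Your proposal follows essentially the same route as the paper: the authors also obtain Theorem~\ref{thm-1.3} by combining the germ-level realization results of \cite{BGN12} (worked out explicitly in Section~\ref{sect-4} for the pseudo-Riemannian and para-K\"ahler cases via quadratic perturbations of a flat structure) with the transplantation principle of Theorem~\ref{thm-1.1}, the curvature model at the base point depending only on the germ there. Your explicit inventory of compact carrier manifolds (flat split tori, complex tori, the four-dimensional (para)-K\"ahler--Weyl examples) is a point the paper leaves implicit, and it is handled correctly.
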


\section{The proof of Theorem~\ref{thm-1.1}}\label{sect-2}

\subsection*{Notational conventions} We introduce some basic notational conventions that we shall employ throughout Section~\ref{sect-2}.

\begin{definition}\label{defn-2.1}
\rm Let $B_{3r}$ be the ball of radius $3r$ about the origin in $\mathbb{R}^m$ and let $h$
be a smooth $k$-tensor defined on $B_{3r}$. Let $||h||_{3r}$ be the $C^0$ norm of $h$  and let
$||h||_{3r}^1$ be the $C^1$ norm of
$h$ on $B_{3r}$, i.e.
\begin{eqnarray*}
&&||h||_{3r}:=\sup_{x\in B_{3r},1\le i_1\le m,...,1\le i_k\le m}|h_{i_1...i_k}(x)|,\\
&&||h||_{3r}^1:=||h||_{3r}+\sup_{x\in B_{3r},1\le j\le i_1\le m,...,1\le i_k\le m}
|\partial_{x_j}h_{i_1...i_k}(x)|\,.
\end{eqnarray*}
We shall also denote these norms by $||h||$ and $||h||^1$ when no confusion is likely to result.
We shall need $||\cdot||^1$ in
Theorem~\ref{thm-2.2} and Theorem~\ref{thm-2.7} to be able to study geodesic completeness in Section~\ref{sect-2.10}; in
the remaining results we will either study $C^0$ approximating transplants or simple transplants without estimates.\end{definition}

\begin{definition}\label{defn-2.2}
\rm Let $P_i$ be the base points of manifolds $M_i$. Let
$\mathcal{S}$ be a structure from the list given in
Definition~\ref{defn-1.1}. Let $S_1\in\mathcal{S}(M_1)$ be the germ
of a suitable structure on $M_1$ at $P_1$ and let
$S_2\in\mathcal{S}(M_2)$ be  a structure which is defined on all of
$M_2$. We will choose suitably normalized local coordinate systems
$\vec x=(x^1,...,x^m)$  centered at $P_1$ which are defined on an
open set $\mathcal{U}_1\subset M_1$ and suitably normalized
coordinates $\vec y=(y^1,...,y^m)$ centered at $P_2$ which are
defined on a open set $\mathcal{U}_2\subset M_2$. We use these
coordinates to identify $\mathcal{U}_1$ and $\mathcal{U}_2$ with
$B_{3r}$ and $P_1=P_2=0$ for some $r>0$; we will often shrink $r$ in
the course of a particular discussion. We use the identification
$\mathcal{U}_1=B_{3r}$ to regard $S_1$ as defining a structure
on $B_{3r}$ and we use the identification $\mathcal{U}_2=B_{3r}$ to
regard $S_2$ as defining a structure on $B_{3r}$ as well. Our task
will be to find a structure $\tilde S_2\in\mathcal{S}(B_{3r})$ so
that $\tilde S_2=S_1$ on $B_r$ and so that $\tilde S_2=S_2$ on
$B_{2r}^c$; $\tilde S_2$ can then be extended to all of $M_2$ to
agree with $S_2$ on $\mathcal{U}_2^c$. In this setting, we will say
that {\it $\tilde S_2$ is isomorphic to $S_1$ near $P_2$ and agrees
with $S_2$ away from $P_2$}.
\end{definition}

\begin{definition}\label{defn-2.3}
\rm Let $\phi\in C_0^\infty(B_{3})$ be a mesa function so
$\phi\equiv1\text{ on }B_1$, so $\phi\equiv0$ on $B_{2}^c$, and so $0\le\phi\le 1$. We let
$\phi_r:=\phi(x/r)$ be the corresponding mesa function on $B_{3r}$.
There is a constant $C$ which is independent of $r$ so that
\begin{equation}\label{eqn-2.a}
||\phi_r||_{3r}\le 1\text{ and }||\phi_r||_{3r}^1\le Cr^{-1}\,.
\end{equation}\end{definition}

\begin{definition}\label{defn-2.4}
\rm We may decompose $\otimes^2T^*M=S^2(M)\oplus\Lambda^2(M)$ as the direct
sum of the symmetric $2$-forms $S^2 (M)$ and the alternating $2$-forms $\Lambda^2(M)$.
 If $J$ is an endomorphism of $TM$ with $J^2=\pm\id$, let
$S_\pm^2(M)\subset S^2(M)$ and $\Lambda_\pm^2(M)\subset\Lambda^2(M)$ be the $\pm1$ eigenbundles of the
associated action by $J$ on $S^2(M)$ and on $\Lambda^2(M)$.
If $\mathbb{V}$ is a smooth vector bundle over $M$, let $C^\infty(\mathbb{V})$
denote the space of smooth sections to $\mathbb{V}$.\end{definition}

We now treat the structures of Definition~\ref{defn-1.1} seriatim; we have chosen the order
of presentation to be able to use results about certain structures in the study of subsequent structures.
We refer to \cite{CFG96}
for further details concerning para-complex geometry. Although partitions of
unity are used in the proofs, certain structures require constructions which are far from direct.
We shall employ{\it (para)-K\"ahler potentials} to examine (para)-K\"ahler geometry. For use as input in studying
subsequent more delicate structures, it will be convenient in many instances to be able to choose $\tilde S_2$ so that
$||\tilde S_2-S_2||<\epsilon$ or so that $||\tilde S_2-S_2||^1<\epsilon$
when $\epsilon>0$ is given a-priori.

\subsection{Affine structures}\label{sect-2.1}
The pair $(M,\nabla)$ is called an {\it affine manifold} and $\nabla$ is said to be an {\it affine connection}
if $\nabla$ is a torsion free connection on the tangent bundle $TM$.

\begin{theorem}\label{thm-2.1} Let $\epsilon>0$ be given. Let $\nabla_1$ be the germ of a
torsion free connection at $P_1\in M_1$ and let $\nabla_2$ be a torsion free connection on $M_2$. There exists
a torsion free connection $\tilde\nabla_2$ on $M_2$ so that $\tilde\nabla_2$ is equal to $\nabla_2$ away from $P_2$,
so that $\tilde\nabla_2$ is isomorphic to $\nabla_1$ near $P_2$, and so that $||\tilde\nabla_2-\nabla_2||<\epsilon$.
\end{theorem}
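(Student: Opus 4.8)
The plan is to work entirely in the coordinate picture set up in Definition~\ref{defn-2.2}, writing $\nabla_1$ and $\nabla_2$ on the common ball $B_{3r}$ via their Christoffel symbols ${}^1\Gamma_{ij}{}^k$ and ${}^2\Gamma_{ij}{}^k$. The key observation is that the difference of two torsion free connections is a tensor: $A := \nabla_1 - \nabla_2$ has components $A_{ij}{}^k = {}^1\Gamma_{ij}{}^k - {}^2\Gamma_{ij}{}^k$ which transform tensorially and which are symmetric in $i,j$ because both connections are torsion free. So the interpolation problem becomes linear. I would define $\tilde\nabla_2 := \nabla_2 + \phi_r \cdot A$, i.e. $\tilde\Gamma_{ij}{}^k := {}^2\Gamma_{ij}{}^k + \phi_r(x)\, A_{ij}{}^k$, where $\phi_r$ is the mesa function of Definition~\ref{defn-2.3}. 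Since $\phi_r\equiv 1$ on $B_r$, we get $\tilde\nabla_2 = \nabla_1$ on $B_r$; since $\phi_r\equiv 0$ on $B_{2r}^c$, we get $\tilde\nabla_2 = \nabla_2$ there, so it extends smoothly to all of $M_2$ as $\nabla_2$ outside $\mathcal{U}_2$.

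Next I would check that $\tilde\nabla_2$ is again a torsion free connection. Because $\phi_r$ is a scalar function and $A$ is a symmetric $(1,2)$-tensor, $\tilde\Gamma_{ij}{}^k - \tilde\Gamma_{ji}{}^k = \phi_r(A_{ij}{}^k - A_{ji}{}^k) = 0$, so the torsion vanishes; and $\tilde\Gamma - {}^2\Gamma$ being tensorial means $\tilde\Gamma$ transforms with the correct inhomogeneous Christoffel transformation law, hence defines a genuine connection on $TM_2$. The statement "$\tilde\nabla_2$ is isomorphic to $\nabla_1$ near $P_2$" holds on the nose in these coordinates: on $B_r$ it literally equals $S_1=\nabla_1$ in the chosen charts, so the local isomorphism is the coordinate identification itself.

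For the $\epsilon$-estimate I would use the norm conventions of Definition~\ref{defn-2.1}. We have $\tilde\nabla_2 - \nabla_2 = \phi_r A$, and since $0\le\phi_r\le 1$ (by \eqref{eqn-2.a}), $\|\tilde\nabla_2 - \nabla_2\|_{3r} = \|\phi_r A\|_{3r} \le \|A\|_{3r}$. The point is that $A$ is a fixed smooth tensor with $A(0)$ arbitrary but finite; by shrinking $r$ we make $\sup_{B_{3r}}|A_{ij}{}^k|$ as close to $|A_{ij}{}^k(0)|$ as we wish, but that does not obviously go to zero. The standard fix is to first change the coordinate chart $\vec x$ on $M_1$ near $P_1$ so that $\nabla_1$ is in geodesic (normal) coordinates at $P_1$, killing ${}^1\Gamma_{ij}{}^k(0)=0$, and likewise choose $\vec y$ so that ${}^2\Gamma_{ij}{}^k(0)=0$; then $A(0)=0$, so $\|A\|_{3r}\to 0$ as $r\to 0$ by continuity, giving $\|\tilde\nabla_2-\nabla_2\|_{3r}<\epsilon$ for $r$ small. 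This normalization step — arranging that both connections have vanishing Christoffel symbols at the base point so that the tensor $A$ is small on a small ball — is the one genuinely necessary idea, and it is exactly the "suitably normalized coordinate systems" promised in Definition~\ref{defn-2.2}; everything else is the linear cutoff construction. (If one also wanted the $C^1$ estimate $\|\tilde\nabla_2-\nabla_2\|^1<\epsilon$, the derivative term $(\partial_{x_j}\phi_r) A$ is of size $Cr^{-1}\|A\|$, and since in normal coordinates $A=O(|x|)$ near $0$ one still gets $r^{-1}\cdot O(r)=O(1)$ — so one would additionally need the first jets to match, i.e. normalize so $A$ and $\partial A$ vanish at $0$, which is why the theorem as stated only claims the $C^0$ bound.)
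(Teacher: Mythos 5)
Your proposal is correct and follows essentially the same route as the paper: normalize both coordinate systems so the Christoffel symbols vanish at the base points, observe that the difference $\Phi=\Gamma_1-\Gamma_2$ is a symmetric tensor vanishing at $0$, glue with the mesa function (your $\nabla_2+\phi_r\Phi$ is literally the paper's $\phi_r\nabla_1+(1-\phi_r)\nabla_2$), and shrink $r$ to get $\|\phi_r\Phi\|_{3r}<\epsilon$. Your closing remark about why only the $C^0$ bound is claimed is a correct bonus observation not spelled out in the paper.
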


\begin{proof} Let
$\nabla_{1,\partial_{x_i}}\partial_{x_j}=\Gamma_{1,ij}{}^k\partial_{x_k}
\text{ and
}\nabla_{2,\partial_{y_i}}\partial_{y_j}=\Gamma_{2,ij}{}^k\partial_{y_k}$
define the {\it Christoffel symbols} of the connections where we
adopt the {\it Einstein convention} and sum over repeated indices.
Since $\nabla_1$ and $\nabla_2$ are torsion free, we can normalize
the coordinate system $\vec x$ (resp. $\vec y$) so that $\Gamma_1(P_1)=0$
(resp. $\Gamma_2(P_2)=0)$; see, for
example, the discussion in \cite{BGN12}. Having chosen $\vec x$ and
$\vec y$, we now identify $P_1=P_2=0$ and
$\mathcal{U}_1=\mathcal{U}_2=B_{3r}$. It is crucial here and it will
be crucial in subsequent arguments to normalize the structures
before identifying $P_1$ with $P_2$; the freedom to adjust $\vec x$
and $\vec y$ separately is an important one. The difference $\Phi$
of two connections is tensorial; we set
$$\Phi_{ij}{}^k:=\Gamma_{1,ij}{}^k-\Gamma_{2,ij}{}^k\text{ on }B_{3r}\,.$$
Since $\Gamma_1(0)=\Gamma_2(0)=0$, $\Phi(0)=0$ so by shrinking $r$ if necessary,
we may assume that $||\Phi||_{3r}<\varepsilon$.
Set $\tilde\nabla_2=\phi_r\cdot\nabla_1+(1-\phi_r)\cdot\nabla_2$.
Then $\tilde\nabla_2$ is torsion free, $\tilde\nabla_2$ agrees
with $\nabla_1$ on $B_{r}$, $\tilde\nabla_2$ agrees with $\nabla_2$ on $B_{2r}^c$, and
\medbreak\qquad\qquad\qquad
$||\tilde\nabla_2-\nabla_2||_{3r}=||\phi_r\nabla_1+(1-\phi_r)\nabla_2-\nabla_2||_{3r}=||\phi_r\Phi||_{3r}<\epsilon$.
\end{proof}

\subsection{Pseudo-Riemannian manifolds}\label{sect-2.2}
A {\it pseudo-Riemannian metric of signature $(p,q)$} is a non-degenerate bilinear form
$g\in C^\infty(S^2(T^*M))$ of signature $(p,q)$
on $TM$; the pair $(M,g)$ is a {\it pseudo-Riemannian manifold of signature $(p,q)$}. Not every manifold admits such a metric; for example the even dimensional spheres do not admit pseudo-Riemannian metrics of signature $(p,q)$ if $p>0$ and if $q>0$.
\begin{theorem}\label{thm-2.2} Let $\epsilon>0$ be given. Let $g_1$ be the germ of a
pseudo-Riemannian metric of signature $(p,q)$ at $P_1\in M_1$
 and let $g_2$ be a pseudo-Riemannian metric of signature $(p,q)$ on $M_2$. There exists
a pseudo-Riemannian metric $\tilde g_2$ on $M_2$ so that $\tilde g_2$ is equal to $g_2$ away from $P_2$, so that $\tilde g_2$ is isomorphic to $g_1$ near $P_2$, and so that $||g_2-\tilde g_2||^1<\epsilon$.
\end{theorem}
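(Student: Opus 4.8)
The plan is to mimic the affine case, but now the naive convex combination $\phi_r g_1+(1-\phi_r)g_2$ fails to obviously have the correct signature, so the key point is to first arrange that $g_1$ and $g_2$ are $C^0$-close after choosing the coordinate systems, and then to observe that nondegeneracy (hence the signature, since the signature cannot jump under continuous deformation through symmetric forms that stay invertible) is an open condition. First I would normalize the coordinate system $\vec x$ on $M_1$ so that $g_1(P_1)$ is the standard diagonal form $\eta$ of signature $(p,q)$, and likewise normalize $\vec y$ on $M_2$ so that $g_2(P_2)=\eta$; this is possible by Gram--Schmidt at the single point and is exactly the "freedom to adjust $\vec x$ and $\vec y$ separately" that was emphasized after Theorem~\ref{thm-2.1}. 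Having identified $P_1=P_2=0$ and $\mathcal{U}_1=\mathcal{U}_2=B_{3r}$, the tensor $\Psi:=g_1-g_2$ then vanishes at the origin, so by shrinking $r$ I may assume $\|\Psi\|_{3r}^1$ is as small as I like; in particular both $g_1$ and $g_2$ are uniformly close to the constant form $\eta$ on $B_{3r}$.

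Now set $\tilde g_2:=\phi_r g_1+(1-\phi_r)g_2 = g_2+\phi_r\Psi$. This is a smooth symmetric $2$-tensor on $B_{3r}$, it equals $g_1$ on $B_r$, and it equals $g_2$ on $B_{2r}^c$, so it extends to all of $M_2$. For each $x\in B_{3r}$ the value $\tilde g_2(x)$ is a convex combination of the two symmetric bilinear forms $g_1(x)$ and $g_2(x)$, each of which lies within distance $\|\Psi\|_{3r}$ of $\eta$; hence $\tilde g_2(x)$ also lies within $\|\Psi\|_{3r}$ of $\eta$. Since the set of symmetric matrices of signature $(p,q)$ is open and contains a ball around $\eta$ whose radius depends only on $\eta$ (equivalently on $m$ and $(p,q)$), choosing $r$ small enough that $\|\Psi\|_{3r}$ is below that radius guarantees $\tilde g_2$ is everywhere nondegenerate of signature $(p,q)$, i.e. $\tilde g_2\in\mathcal{S}(B_{3r})$. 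Finally, the $C^1$ estimate is immediate from the product rule and \eqref{eqn-2.a}: $\|\tilde g_2-g_2\|^1=\|\phi_r\Psi\|^1\le \|\phi_r\|^1\,\|\Psi\|^1$, and since $\Psi(0)=0$ the $C^1$ norm $\|\Psi\|_{3r}^1$ is itself $O(r)$ by Taylor's theorem, so the factor $Cr^{-1}$ from $\|\phi_r\|^1$ is absorbed and $\|\tilde g_2-g_2\|^1<\epsilon$ once $r$ is small.

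The only step requiring genuine care — and the one I regard as the main (mild) obstacle — is bounding $\|\Psi\|_{3r}^1$: for the $C^0$ part we only needed $\Psi(0)=0$, but to beat the $r^{-1}$ blow-up of $\|\phi_r\|^1$ we need the first derivatives of $\Psi$ to contribute an extra power of $r$, which they do not do for free. The fix is to also normalize the coordinates so that the first derivatives of $g_1$ and of $g_2$ vanish at the origin — that is, to use geodesic normal coordinates for each metric, in which $g_i=\eta+O(|x|^2)$. Then $\Psi=O(|x|^2)$, so $\|\Psi\|_{3r}=O(r^2)$ and $\partial\Psi=O(|x|)$ gives $\|\Psi\|_{3r}^1=O(r)$; multiplying by $Cr^{-1}$ leaves an $O(1)\cdot r$ bound that is $<\epsilon$ for $r$ small. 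This is the same philosophy as killing the Christoffel symbols at $P_i$ in the affine proof, and it is why Definition~\ref{defn-2.2} insists on "suitably normalized" coordinates; everything else is a routine verification.
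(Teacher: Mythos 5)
Your proposal is correct and follows essentially the same route as the paper: normalize both metrics at the base points to the standard diagonal form, use geodesic normal coordinates so the first derivatives vanish (giving $||g_1-g_2||_{3r}\le Cr^2$ and $||g_1-g_2||_{3r}^1\le Cr$), patch with the mesa function, apply the Leibniz rule so the $r^{-1}$ from $||\phi_r||^1$ hits the $O(r^2)$ term, and invoke openness of the set of signature-$(p,q)$ metrics in the $C^0$ topology. Just state the product-rule estimate in its two-term form $||\phi_r\Psi||^1\le C\{||\phi_r||^1\,||\Psi||+||\phi_r||\,||\Psi||^1\}$ from the start, since the crude bound $||\phi_r||^1||\Psi||^1$ only gives $O(1)$ and does not suffice.
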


\begin{proof}
Choose local coordinates $\vec x$ (resp. $\vec y$) near $P_1\in M_1$ (resp. $P_2\in M_2$).
Let
$$g_{1,ij}:=g_1(\partial_{x_i},\partial_{x_j})\text{ and }g_{2,ij}:=g_2(\partial_{y_i},\partial_{y_j})\,.$$
By choosing geodesic polar coordinates, we may assume the first derivatives of the metric vanish at
the points $P_1$ and $P_2$, i.e.
$$\partial_{x_k}g_{1,ij}(P_1)=0\text{ and }\partial_{y_k}g_{2,ij}(P_2)=0\text{ for }1\le i,j,k\le m\,.$$
Since $g_1$ and $g_2$ have signature $(p,q)$, by making a linear change of coordinates
we can further normalize the choice of $\vec x$ and $\vec y$ so:
$$
g_{1,ij}(P_1)=g_{2,ij}(P_2)=
   \left\{\begin{array}{rl}0&\text{if }i\ne j\\-1&\text{if }i=j\le p\\+1&\text{if }i=j>p\end{array}\right\}\,.
$$
We now identify $P_1=P_2=0\in\mathbb{R}^m$. Because  the first derivatives of the metric vanish at $0$ and because
$g_1(0)=g_2(0)$, we have
\begin{equation}\label{eqn-2.b}
||g_1-g_2||_{3r}\le Cr^2\text{ and }||g_1-g_2||_{3r}^1\le Cr
\end{equation}
for some constant $C$ which is independent of $r$ if $0\le r<r_0$ is sufficiently small.
We set $\tilde g_2:=\phi_rg_1+(1-\phi_r)g_2\in S^2(T^*M)$. Then
$\tilde g_2=g_1$ on $B_r$, and $\tilde g_2=g_2$ on $B_{2r}^c$. Since $\tilde g-g_2=\phi_r(g_1-g_2)$, we use
Equation~(\ref{eqn-2.a}) and Equation~(\ref{eqn-2.b}) to see:
\begin{eqnarray*}
||(\tilde g_2-g_2)||_{3r}&\le&||\phi_r||_{3r}\cdot||g_1-g_2||_{3r}\le Cr^2,\\
||\tilde g_2-g_2||_{3r}^1&\le&C\{||\phi_r||_{3r}^1\cdot||(\tilde g_2-g_2)||_{3r}
+||\phi_r||_{3r}\cdot||(\tilde g_2-g_2)||_{3r}^1\}\\
&\le&C\cdot r^{-1}\cdot r^2+C\cdot r\le 2C\cdot r
\end{eqnarray*}
where $C$ is a generic constant.
The desired estimate that $||\tilde g_2-g_2||^1<\epsilon$ now follows by choosing $r$ small enough.
Since the set of pseudo-Riemannian metrics on $M_2$ of signature $(p,q)$ is an open subset of $C^\infty(S^2(T^*M))$ in the $C^0$
topology, it follows that $\tilde g_2$ is non-degenerate if $\epsilon$ is chosen sufficiently small.
\end{proof}

\subsection{Almost (para)-complex structures}\label{sect-2.3}
We say that an endomorphism $J_+$ of $TM$
 is an {\it almost para-complex structure}
if $J_+^2=\id$ and if $\operatorname{Tr}(J_+)=0$;
similarly an endomorphism $J_-$ of $TM$ is an {\it almost complex structure}
if $J_-^2=-\id$; it is then immediate that $\operatorname{Tr}(J_-)=0$.
It is convenient to have a common notation
although we shall never be discussing both structures at the same time. The existence
of an almost  (para)-complex structure necessarily
implies $m=2\bar m$ is even. Not every manifold admits such a structures;
the even dimensional spheres $S^m$ for $m\ge8$
do not admit such structures.
The pair $(M,J_\pm)$ is said to be an {\it almost (para)-complex manifold}.

\begin{theorem}\label{thm-2.3} Let $\epsilon>0$ be given. Let $J_{1,\pm}$ be the germ of an
almost (para)-complex structure near $P_1\in M_1$ and let $J_{2,\pm}$ be an almost
(para)-complex structure on $M_2$. There exists
an almost (para)-complex structure $\tilde J_{2,\pm}$ on $M_2$ so that $\tilde J_{2,\pm}$ is equal to $J_{2,\pm}$ away from $P_2$,
so that $\tilde J_{2,\pm}$ is isomorphic to $J_{1,\pm}$ near $P_2$, and so that $||\tilde J_{2,\pm}-J_{2,\pm}||<\epsilon$.
\end{theorem}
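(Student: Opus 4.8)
The plan is to imitate the partition-of-unity interpolation of Theorem~\ref{thm-2.1} and Theorem~\ref{thm-2.2}, with one essential new ingredient. The set of almost (para)-complex structures on a vector space is not a linear, nor even a convex, subspace of $\operatorname{End}(\mathbb{R}^m)$, so the naive convex combination $\phi_r J_{1,\pm}+(1-\phi_r)J_{2,\pm}$ will in general fail the defining relation $J^2=\pm\id$. I would therefore interpolate first and then project the result back onto the constraint set by a fixed smooth retraction.

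First I would normalize. Let $J_0$ be the standard constant-coefficient almost (para)-complex structure on $\mathbb{R}^{m}=\mathbb{R}^{2\bar m}$. Since any two almost (para)-complex structures on a vector space are conjugate by an element of $GL(m,\mathbb{R})$ --- choose a complex basis in the complex case; in the para case choose bases of the $\pm1$-eigenbundles, using $\operatorname{Tr}(J_+)=0$ to see they each have dimension $\bar m$ --- we may, exactly as in the previous proofs and using the freedom to adjust $\vec x$ and $\vec y$ separately before identifying $P_1=P_2=0$, choose the normalized coordinates so that $J_{1,\pm}(0)=J_0=J_{2,\pm}(0)$ on $\mathcal{U}_1=\mathcal{U}_2=B_{3r}$. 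Because $J_{1,\pm}(0)=J_{2,\pm}(0)$, shrinking $r$ makes $\|J_{1,\pm}-J_0\|_{3r}$, $\|J_{2,\pm}-J_0\|_{3r}$, and hence $\|J_{1,\pm}-J_{2,\pm}\|_{3r}$, as small as desired.

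Next, fix the retraction. Near $J_0$ the endomorphism $\pm A^2$ (meaning $-A^2$ in the complex case and $A^2$ in the para case) is close to $\id$, so its principal square root is defined and depends smoothly on $A$; set $\rho_\pm(A):=A\,(\pm A^2)^{-1/2}$. Since $A$ commutes with every power series in $A^2$, one checks $\rho_\pm(A)^2=\pm\id$; in the para case $\rho_\pm(A)$ has trace zero because the integer-valued trace is locally constant on $\{A:A^2=\id\}$ and vanishes at $J_0$; and $\rho_\pm(A)=A$ whenever $A^2=\pm\id$. Thus $\rho_\pm$ is a smooth retraction of a neighbourhood $\mathcal{N}$ of $J_0$ in $\operatorname{End}(\mathbb{R}^m)$ onto the set of almost (para)-complex structures, fixing every such structure it meets. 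Now put $J_{2,\pm}^{\flat}:=\phi_r J_{1,\pm}+(1-\phi_r)J_{2,\pm}$; for $r$ small its pointwise values, being convex combinations of $J_{1,\pm}(x)$ and $J_{2,\pm}(x)$ which lie in a small ball about $J_0$, remain in $\mathcal{N}$, so we may define
$$\tilde J_{2,\pm}(x):=\rho_\pm\bigl(J_{2,\pm}^{\flat}(x)\bigr),$$
extended by $J_{2,\pm}$ over $\mathcal{U}_2^c$. On $B_r$ we have $J_{2,\pm}^{\flat}=J_{1,\pm}$, so $\tilde J_{2,\pm}=\rho_\pm(J_{1,\pm})=J_{1,\pm}$; on $B_{2r}^c$ we have $J_{2,\pm}^{\flat}=J_{2,\pm}$, so $\tilde J_{2,\pm}=J_{2,\pm}$; hence the gluing is consistent and $\tilde J_{2,\pm}$ is a global almost (para)-complex structure on $M_2$. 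Finally, $\rho_\pm$ is Lipschitz with some constant $L$ on the compact set $\overline{\mathcal{N}}$ (shrunk if necessary), and $\|J_{2,\pm}^{\flat}-J_{2,\pm}\|_{3r}=\|\phi_r(J_{1,\pm}-J_{2,\pm})\|_{3r}\le\|J_{1,\pm}-J_{2,\pm}\|_{3r}$, so using $\rho_\pm(J_{2,\pm})=J_{2,\pm}$ we get $\|\tilde J_{2,\pm}-J_{2,\pm}\|_{3r}\le L\,\|J_{1,\pm}-J_{2,\pm}\|_{3r}<\epsilon$ once $r$ is small enough. I expect the main obstacle to be exactly this non-linearity of the constraint $J^2=\pm\id$; everything else is the by-now familiar cutoff argument, and the only point requiring genuine care is verifying that $\rho_\pm$ is a smooth retraction fixing the \emph{entire} constraint set --- not merely the model $J_0$ --- so that the transplanted structure agrees with $J_{1,\pm}$ exactly, and not just approximately, on $B_r$.
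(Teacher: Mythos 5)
Your proof is correct, but it takes a genuinely different route from the paper. The paper also begins by normalizing coordinates so that $J_{1,\pm}$ and $J_{2,\pm}$ agree at the identified base point, and it likewise flags that the convex combination $\phi_r J_{1,\pm}+(1-\phi_r)J_{2,\pm}$ fails the relation $J^2=\pm\id$; but instead of repairing the convex combination it avoids it altogether. The paper builds frames adapted to the two structures, $e_i=\partial_{x_i}$, $e_{i+\bar m}=J_{1,\pm}e_i$ and $f_i=\partial_{y_i}$, $f_{i+\bar m}=J_{2,\pm}f_i$, writes $\vec f(x)=\Theta(x)\vec e(x)$ with $\Theta(0)=\id$, deforms the transition map by rescaling its \emph{argument}, $\vec g(x):=\Theta(\phi_r(x)x)\vec e(x)$, and then \emph{defines} $\tilde J_{2,\pm}$ by $\tilde J_{2,\pm}g_i=g_{i+\bar m}$, $\tilde J_{2,\pm}g_{i+\bar m}=\pm g_i$; any structure so defined from a frame automatically satisfies $J^2=\pm\id$ and has trace zero, so no projection is ever needed. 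Your argument instead interpolates linearly and then applies the smooth retraction $\rho_\pm(A)=A(\pm A^2)^{-1/2}$; your verifications (commutation of $A$ with power series in $A^2$, local constancy of the integer-valued trace in the para case, $\rho_\pm=\id$ on the constraint set so that the transplant equals $J_{1,\pm}$ exactly on $B_r$, and the Lipschitz bound giving $\|\tilde J_{2,\pm}-J_{2,\pm}\|<\epsilon$) are all sound. The trade-off: the paper's frame construction is elementary, requires no functional calculus, and exhibits the new structure as conjugate to the old one by an explicit $\operatorname{GL}(m,\mathbb{R})$-valued map; your ``interpolate then project'' scheme is more systematic, applies verbatim to any constraint set admitting a smooth retraction fixing the set pointwise, and reduces the $\epsilon$-estimate to a one-line Lipschitz inequality.
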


\begin{proof}
Choose local coordinates $\vec x$ (resp. $\vec y$) near $P_1\in M_1$ (resp. $P_2\in M_2$).
Make a linear change of coordinates to normalize
the coordinate systems so that the structures are isomorphic on $T_{P_i}(M_i)$:
\begin{equation}\label{eqn-2.c}
\begin{array}{l}
J_\pm(P_1)\partial_{x_i}=\partial_{x_{i+\bar
m}}\quad\text{and}
\quad J_\pm(P_1)\partial_{x_{i+\bar m}}=\pm\partial_{x_i}\quad\text{for}\quad 1\le i\le\bar m,\\
J_\pm(P_2)\partial_{y_i}=\partial_{y_{i+\bar
m}}\quad\text{and}
\quad J_\pm(P_2)\partial_{y_{i+\bar m}}=\pm\partial_{y_i}\quad\text{for}\quad 1\le i\le\bar m.\\
\end{array}
\end{equation}
We now identify $P_1=P_2=0\in\mathbb{R}^m$ and have
$J_{1,\pm}(P_1)=J_{2,\pm}(P_2)$. By shrinking $r$ if
necessary, we may assume
$$||J_{1,\pm}-J_{2,\pm}||_{3r}<\epsilon\,.$$

The convex combination of (para)-complex structures is, of course,
not a (para)-complex structure so we must proceed a bit
differently and not simply examine $\phi_r J_{1,\pm}+(1-\phi_r)J_{2,\pm}$.
For $1\le i\le\bar m$, we define tangent vector fields $e_i$ and $f_i$ on $B_{3r}$ by setting:
$$
e_i:=\partial_{{{x}}_i},\quad e_{i+\bar m}:=J_{1,\pm}e_i, \quad
f_i:=\partial_{y_i},\quad\ f_{i+\bar m}:=J_{2,\pm}f_i\,.
$$
Then $\vec e:=(e_1,...,e_m)$ and $\vec f:=(f_1,...,f_m)$ are linearly independent at $0$ and hence are frames
for $TB_{3r}$ if we shrink $r$. Furthermore, as $\vec e(0)=\vec f(0)$,
 we may assume $||\vec e-\vec f||_{3r}<\epsilon$ if we shrink $r$ still further. We may express
$$\vec f(x)=\Theta(x)\cdot \vec e(x)\text{ on }B_{3r}
\text{ for
}\Theta:B_{3r}\rightarrow\operatorname{GL}(m,\mathbb{R})\text{ with
} \Theta(0)=\id\,.$$ Set $\vec g(x):=\Theta(\phi_r(x)x)\vec
e(x)$. Then $\vec g=\vec f$ on $B_{r}$ and $\vec g=\vec e$
on $B_{2r}^c$. As $\Theta(0)-\id=0$, by shrinking $r$ if
necessary, we may estimate
$$||\vec g-\vec e||_{3r}\le C_m||\Theta-\id||_{3r}\cdot||\vec e||_{3r}<\epsilon$$
for some universal constant $C_m$ only depending on the underlying dimension
of the manifold.
The desired extension $\tilde J_{2,\pm}$ may
then be defined by setting:
\medbreak
\qquad\qquad\qquad$\tilde J_{2,\pm}g_i=g_{i+\bar m}$ and
$\tilde J_{2,\pm}g_{i+\bar m}=\pm g_i$ for $1\le i\le\bar m$.
\end{proof}

\subsection{Almost (para)-Hermitian structures}\label{sect-2.4}
An {\it almost (para)-Hermitian structure} on a manifold $M$ is a pair
$(g,J_\pm)$ where $g$ is a pseudo-Riemannian metric on $M$, where
$J_\pm$ is an almost (para)-complex structure on $M$, and
where $J_\pm^*g=\mp g$. The topological restrictions discussed
previously show not every manifold admits such a structure.
The triple $(M,g,J_\pm)$ is said to be an {\it almost (para)-Hermitian manifold}.

\begin{theorem}\label{thm-2.4} Let $\epsilon>0$ be given. Let $(g_1,J_{1,\pm})$ be the germ of an
almost (para)-Hermitian structure of signature $(p,q)$ near $P_1\in M_1$
and let $(g_2,J_{2,\pm})$ be an almost
(para)-Hermitian structure on $M_2$ of signature $(p,q)$. There exists
an almost (para)-Hermitian structure $(\tilde g_2,\tilde J_{2,\pm})$
on $M_2$ so $(\tilde g_2,\tilde J_{2,\pm})$ is equal to
$(g_2,J_{2,\pm})$ away from $P_2$,
so $(\tilde g_2,\tilde J_{2,\pm})$ is isomorphic to $(g_1,J_{1,\pm})$ near $P_2$,  so
$||\tilde g_2-g_2||<\epsilon$, and so $||\tilde J_{2,\pm}-J_{2,\pm}||<\epsilon$.
\end{theorem}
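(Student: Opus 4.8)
The plan is to transplant the almost (para)-complex structure first using Theorem~\ref{thm-2.3}, and then to manufacture a compatible metric by averaging a naively glued metric over the transplanted structure. The only delicate point is that the coordinate normalization of Theorem~\ref{thm-2.2} (which puts the metric in standard diagonal form with vanishing first derivatives at the base point) and the normalization~(\ref{eqn-2.c}) of Theorem~\ref{thm-2.3} (which puts $J_\pm$ in standard form at the base point) must both be arranged in one and the same coordinate system. This is possible because the pair $(g_i(P_i),J_{i,\pm}(P_i))$ makes $T_{P_i}M_i$ a (para)-Hermitian vector space of signature $(p,q)$, and any two such vector spaces are linearly isomorphic; thus after passing to geodesic normal coordinates for $g_i$ we may apply a linear change of coordinates -- which does not disturb the vanishing of the first derivatives of $g_i$ at the origin -- that simultaneously brings $g_{i,jk}(P_i)$ to the standard diagonal form and $J_{i,\pm}(P_i)$ to the form~(\ref{eqn-2.c}). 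Having done this, we identify $\mathcal U_1=\mathcal U_2=B_{3r}$; then $g_1(0)=g_2(0)$, $J_{1,\pm}(0)=J_{2,\pm}(0)$ and the first derivatives of $g_i$ vanish at $0$, so by shrinking $r$ we obtain the estimate~(\ref{eqn-2.b}) for $g_1-g_2$ and also $||J_{1,\pm}-J_{2,\pm}||_{3r}$ as small as we please.

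Fix an auxiliary $\epsilon_1>0$. Since the chosen coordinates already satisfy~(\ref{eqn-2.c}), the linear change in the proof of Theorem~\ref{thm-2.3} is the identity and the frame construction given there applies directly, producing an almost (para)-complex structure $\tilde J_{2,\pm}$ on $M_2$ with $\tilde J_{2,\pm}=J_{1,\pm}$ on $B_r$, with $\tilde J_{2,\pm}=J_{2,\pm}$ on $B_{2r}^c$, and with $||\tilde J_{2,\pm}-J_{2,\pm}||<\epsilon_1$. Put $\bar g:=\phi_r g_1+(1-\phi_r)g_2$ on $B_{3r}$, extended by $g_2$ over the remainder of $M_2$; as in the proof of Theorem~\ref{thm-2.2} this is smooth, equals $g_1$ on $B_r$, equals $g_2$ on $B_{2r}^c$, and satisfies $||\bar g-g_2||_{3r}=||\phi_r(g_1-g_2)||_{3r}\le Cr^2$. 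Now average $\bar g$ over the transplanted structure by setting
\begin{equation*}
\tilde g_2(X,Y):=\frac12\bigl(\bar g(X,Y)\mp\bar g(\tilde J_{2,\pm}X,\tilde J_{2,\pm}Y)\bigr),
\end{equation*}
the sign being chosen to match the subscript on $J_\pm$. Using $\tilde J_{2,\pm}^2=\pm\id$ one checks at once that $\tilde J_{2,\pm}^*\tilde g_2=\mp\tilde g_2$, so $(\tilde g_2,\tilde J_{2,\pm})$ satisfies the (para)-Hermitian compatibility condition wherever $\tilde g_2$ is non-degenerate, and the pair is smooth on all of $M_2$.

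It remains to check the gluing, the estimates and non-degeneracy. On $B_r$ we have $\bar g=g_1$ and $\tilde J_{2,\pm}=J_{1,\pm}$; since $J_{1,\pm}^*g_1=\mp g_1$, the averaging operator is the identity and $\tilde g_2=g_1$. The same computation with $J_{2,\pm}^*g_2=\mp g_2$ shows $\tilde g_2=g_2$ on $B_{2r}^c$, where moreover $\tilde J_{2,\pm}=J_{2,\pm}$; hence $(\tilde g_2,\tilde J_{2,\pm})$ agrees with $(g_2,J_{2,\pm})$ away from $P_2$ and is isomorphic to $(g_1,J_{1,\pm})$ near $P_2$. For the size estimate, write $g_2=\frac12(g_2\mp J_{2,\pm}^*g_2)$, so $\tilde g_2-g_2=\frac12(\bar g-g_2)\mp\frac12(\tilde J_{2,\pm}^*\bar g-J_{2,\pm}^*g_2)$; the last difference equals $\tilde J_{2,\pm}^*(\bar g-g_2)$ plus a term linear in $\tilde J_{2,\pm}-J_{2,\pm}$ with bounded coefficients, so using $||\bar g-g_2||\le Cr^2$, $||\tilde J_{2,\pm}-J_{2,\pm}||<\epsilon_1$ and the uniform boundedness of $\bar g$ and $\tilde J_{2,\pm}$ for $r$ small we get $||\tilde g_2-g_2||\le C(r^2+\epsilon_1)$. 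Choosing first $\epsilon_1$ and then $r$ sufficiently small makes $||\tilde g_2-g_2||<\epsilon$ and $||\tilde J_{2,\pm}-J_{2,\pm}||<\epsilon_1<\epsilon$; and since the metrics of signature $(p,q)$ form a $C^0$-open subset of $C^\infty(S^2(T^*M_2))$, $\tilde g_2$ is non-degenerate of signature $(p,q)$ once $\epsilon$ is small, exactly as at the end of the proof of Theorem~\ref{thm-2.2}. The one real obstacle is the simultaneous normalization of $g$ and $J$ at the base points; the averaging device and the estimates that follow it are routine.
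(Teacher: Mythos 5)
Your proposal is correct and follows essentially the same route as the paper: simultaneously normalize $g_i(P_i)$ and $J_{i,\pm}(P_i)$ by a linear change of coordinates, transplant $J$ via Theorem~\ref{thm-2.3}, glue the metrics as in Theorem~\ref{thm-2.2}, and then average the glued metric over the transplanted structure, $\tilde g_2=\frac12\{\bar g\mp\tilde J_{2,\pm}^*\bar g\}$, noting that the averaging is the identity on $B_r$ and on $B_{2r}^c$ because $(g_1,J_{1,\pm})$ and $(g_2,J_{2,\pm})$ are already compatible there. The estimates and the non-degeneracy argument likewise match the paper's.
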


\begin{proof} We choose normalized coordinate systems $\vec x$ and $\vec y$ so that the relations of Equation~(\ref{eqn-2.c}) hold.
In the complex setting, we have $p=2\bar q$ and $q=2\bar q$ are both even.
Since $g_1$ and $g_2$ have the same signature, we can make a linear change of coordinates preserving the normalizations
of Equation~(\ref{eqn-2.c}) so that in the complex setting,
\begin{equation}\label{eqn-2.d}
g_{\nu,ij}(P_\nu)=\left\{\begin{array}{rllll}0&\text{if}&i\ne j\\
-1&\text{if}&i=j\le\bar p&\text{or}&\bar m+1\le i\le\bar m+\bar p\\
+1&\text{if}&\bar p<i=j\le\bar m&\text{or}&\bar m+1+\bar p<i\le m
\end{array}\right\}.\end{equation}
In the para-complex setting, we have $p=q=\bar m$ since $g_\nu$ necessarily has neutral signature. We normalize the choice of coordinates
so
\begin{equation}\label{eqn-2.e}
g_{\nu,ij}(P_\nu)=\left\{\begin{array}{rllll}0&\text{if}&i\ne j\\
-1&\text{if}&1\le i=j\le\bar m\\
+1&\text{if}&\bar m<i=j\le m
\end{array}\right\}.\end{equation}
We now identify $P_1=P_2=0\in\mathbb{R}^m$ and have $g_1(0)=g_2(0)$ and $J_{1,\pm}(0)=J_{2,\pm}(0)$.
Let $\epsilon>0$ be given. Let $\delta=\delta(\epsilon)$ to be chosen presently.
We use Theorem~\ref{thm-2.2} and Theorem~\ref{thm-2.3} to define an almost (para)-complex structure $\tilde J_{2,\pm}$
and a pseudo-Riemannian metric $g_3$ so that:
$$\begin{array}{rrr}
\tilde J_{2,\pm}=J_{1,\pm}\text{ on }B_{r},& \tilde J_{2,\pm}=J_{2,\pm}\text{ on }B_{2r}^c,&
    ||\tilde J_{2,\pm}-J_{2,\pm}||<\delta,\\
g_3=g_1\text{ on }B_{r},& g_3=g_2\text{ on }B_{2r}^c,&
    ||g_3-g_2||<\delta.
\end{array}$$
We average over the action of $\tilde J_{2,\pm}$ to define:
$\tilde g_2:=\textstyle\frac12\{g_3\mp\tilde J_{2,\pm}^*g_3\}$.
Since $g_1$ and $g_2$ are (para)-Hermitian metrics, we still have that
$$\tilde g_2=g_1\text{ on }B_{r}\text{ and }\tilde g_2=g_2\text{ on }B_{2r}^c\,.$$
Since $\tilde J_{2,\pm}$ is close to $J_{2,\pm}$, since $\tilde g_3$ is close to $g_2$,
and since $J_{2,\pm}^*g_2=\pm g_2$, we have that $||\tilde g_2-g_2||<\epsilon$
for $\delta=\delta(\epsilon)$ suitably chosen.
Since we can always assume $\epsilon$ small, this
implies that $\tilde g_2$ is non-degenerate. \end{proof}

\subsection{Complex and para-complex structures}\label{sect-2.5}
Let $J_\pm$ be an almost (para)-complex structure on $M$. We say that
$J_\pm$ is {\it integrable} and that $(M,J_\pm)$ is a {\it(para)-complex manifold}
if there are coordinates centered at every point of $M$
so that the relations of Equation~(\ref{eqn-2.c}) hold at all points of the chart:
\begin{equation}\label{eqn-2.f}
J_\pm\partial_{x_i}=\partial_{x_{i+\bar m}}\quad\text{and}
\quad J_\pm\partial_{x_{i+\bar m}}=\pm\partial_{x_i}\quad\text{for}\quad 1\le i\le\bar m\,.
\end{equation}
Equivalently, $J_\pm$ is integrable if the {\it Nijenhuis tensor}\index{Nijenhuis tensor} $N_{J_\pm}$ vanishes where:
$$
N_{J_\pm}(X,Y):=[X,Y]\mp J_\pm[J_\pm X,Y]\mp J_\pm [X,J_\pm Y]\pm[J_\pm X,J_\pm Y]\,.
$$
Complex structures and para-complex structures are rigid.
The following result is now immediate - there is no need to perturb the structures:
\begin{theorem}\label{thm-2.5} Let $J_{1,\pm}$ be the germ of a (para)-complex structure near $P_1\in M_1$ and let $J_{2,\pm}$ be a
(para)-complex structure on $M_2$. Then $J_{2,\pm}$ is isomorphic to $J_{1,\pm}$ near $P_2$.
\end{theorem}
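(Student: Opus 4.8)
The plan is to exploit the rigidity of integrable (para)-complex structures directly, so that no partition-of-unity gluing is needed at all. The key point is that Theorem~\ref{thm-2.5} asserts only that $J_{2,\pm}$ is \emph{isomorphic} to $J_{1,\pm}$ near $P_2$; it does not ask us to produce a new structure $\tilde J_{2,\pm}$ that interpolates between the two. So the entire content reduces to a purely local statement: any two germs of (para)-complex structures of the same dimension are locally isomorphic. This is exactly the Newlander--Nirenberg theorem in the complex case and its (elementary) para-complex analogue, both already encoded in the equivalence of integrability with the existence of adapted coordinates satisfying Equation~(\ref{eqn-2.f}).

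The steps I would carry out are as follows. First, since $J_{1,\pm}$ is integrable near $P_1$, by definition there is a coordinate chart $\vec x=(x^1,\dots,x^m)$ centered at $P_1$, defined on some neighborhood $\mathcal{U}_1$, in which the relations of Equation~(\ref{eqn-2.f}) hold identically. Second, since $J_{2,\pm}$ is integrable, there is likewise a coordinate chart $\vec y=(y^1,\dots,y^m)$ centered at $P_2$, defined on some $\mathcal{U}_2$, in which Equation~(\ref{eqn-2.f}) holds identically for $J_{2,\pm}$. Third, after shrinking $\mathcal{U}_1$ and $\mathcal{U}_2$ we may assume both charts have image $B_{3r}\subset\mathbb{R}^m$ with $P_1=P_2=0$, exactly as in Definition~\ref{defn-2.2}. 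Fourth, the transition map $\vec y\circ\vec x^{-1}$ is then a diffeomorphism $\psi\colon B_{3r}\to B_{3r}$ fixing the origin; in the $\vec x$-coordinates on $\mathcal{U}_1$ the structure $J_{1,\pm}$ is given by the constant coefficient matrix on the left of Equation~(\ref{eqn-2.f}), and in the $\vec y$-coordinates on $\mathcal{U}_2$ the structure $J_{2,\pm}$ is given by the identical constant coefficient matrix. Hence the identity map on $B_{3r}$, read through these two charts, furnishes a local diffeomorphism of a neighborhood of $P_1$ onto a neighborhood of $P_2$ intertwining $J_{1,\pm}$ with $J_{2,\pm}$; equivalently $\psi$ carries $J_{1,\pm}$ to $J_{2,\pm}$. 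This establishes that $J_{2,\pm}$ is isomorphic to $J_{1,\pm}$ near $P_2$ in the sense of Definition~\ref{defn-2.2}, and we may take $\tilde J_{2,\pm}=J_{2,\pm}$ on all of $M_2$ with nothing to glue away from $P_2$.

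The only genuinely nontrivial ingredient is the existence of the adapted coordinate charts, i.e. the passage from integrability of $J_\pm$ (vanishing Nijenhuis tensor, or the given condition that such charts exist at every point) to the normal form in Equation~(\ref{eqn-2.f}). In the complex case this is the Newlander--Nirenberg theorem; in the para-complex case it follows from an application of the Frobenius theorem to the two complementary $\bar m$-dimensional $(\pm1)$-eigendistributions of $J_+$, which are each integrable precisely when the Nijenhuis tensor vanishes, yielding coordinates in which $J_+$ is the stated constant matrix. But in the statement of Theorem~\ref{thm-2.5} the authors have already built this into their \emph{definition} of a (para)-complex structure --- a structure is declared (para)-complex exactly when such charts exist --- so for the purposes of this paper there is no obstacle at all, and the theorem is, as the text says, immediate. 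The main thing to be careful about is purely bookkeeping: shrinking the two neighborhoods compatibly so that both have the same image $B_{3r}$ and both base points land at the origin, which is handled by the conventions of Definition~\ref{defn-2.2}.
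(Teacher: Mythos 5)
Your proposal is correct and follows exactly the route the paper intends: since integrability is defined (or, via Newlander--Nirenberg and Frobenius, shown to be equivalent) to the existence of adapted coordinates in which Equation~(\ref{eqn-2.f}) holds identically, both germs are locally the standard constant-coefficient model, and identifying the two charts gives the isomorphism with no gluing required. This is precisely the rigidity the paper invokes when it calls the result ``immediate.''
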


\subsection{Hermitian and para-Hermitian structures}\label{sect-2.6}
A {\it (para)-Hermitian structure} is a pair $(g,J_\pm)$ where $g$ is a pseudo-Riemannian metric on $M$, where
$J_\pm$ is an integrable (para)-complex structure on $M$, and where $J_\pm^*g=\mp g$. The triple
$(M,g,J_\pm)$ is said to be a {\it (para)-Hermitian manifold}; we emphasize that $g$ can be indefinite in the complex setting --
necessarily $g$ has neutral signature $(\bar m,\bar m)$ in the para-complex setting.
The arguments given to prove Theorem~\ref{thm-2.4}
now extend to establish the following result; it is not necessary to perturb the (para)-complex structure.

\begin{theorem}\label{thm-2.6}
Let $\epsilon>0$ be given. Let $(M_1,g_1,J_{1,\pm})$ be the germ of a
(para)-Hermitian structure near $P_1\in M_1$ of signature $(p,q)$
and let $(M_2,g_2,J_{2,\pm})$ be a
(para)-Hermitian structure on $M_2$ of signature $(p,q)$.
There exists a pseudo-Riemannian metric $\tilde g_2$ on $M_2$ so that
$(\tilde g_2, J_{2,\pm})$ is a (para)-Hermitian structure on $M_2$ which
 is isomorphic to $(g_1,J_{1,\pm})$ near $P_2$,
so that $\tilde g_2=g_2$ away from $P_2$, and so that
$||\tilde g_2-g_2||<\epsilon$.
\end{theorem}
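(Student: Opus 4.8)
The plan is to follow the same two-step ``transplant-then-correct'' strategy used in the proof of Theorem~\ref{thm-2.4}, but now exploiting the rigidity of (para)-complex structures (Theorem~\ref{thm-2.5}) so that the almost (para)-complex part of the structure never has to be perturbed. The key observation is that if $J_{1,\pm}$ and $J_{2,\pm}$ are \emph{integrable}, then by Theorem~\ref{thm-2.5} we may choose the normalized coordinate systems $\vec x$ on $\mathcal{U}_1\subset M_1$ and $\vec y$ on $\mathcal{U}_2\subset M_2$ so that the relations of Equation~(\ref{eqn-2.f}) hold \emph{at every point} of the respective charts, not merely at $P_1$ and $P_2$. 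After identifying $\mathcal{U}_1=\mathcal{U}_2=B_{3r}$ and $P_1=P_2=0$, both $J_{1,\pm}$ and $J_{2,\pm}$ become the \emph{same} constant-coefficient endomorphism $J_\pm$ on $B_{3r}$, namely the one sending $\partial_{x_i}\mapsto\partial_{x_{i+\bar m}}\mapsto\pm\partial_{x_i}$. Thus the almost (para)-complex datum is already globally matched on $B_{3r}$, and we only need to transplant the metric.

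First I would apply Theorem~\ref{thm-2.2} to the pseudo-Riemannian metrics $g_1$ and $g_2$ of signature $(p,q)$ — keeping in mind that in the para-Hermitian case the signature is forced to be neutral $(\bar m,\bar m)$ — to obtain a pseudo-Riemannian metric $g_3$ on $B_{3r}$ with $g_3=g_1$ on $B_r$, $g_3=g_2$ on $B_{2r}^c$, and $||g_3-g_2||<\delta$ for a $\delta=\delta(\epsilon)$ to be fixed later. Here one must take a small bit of care: the linear normalization used in Theorem~\ref{thm-2.2} to bring $g_1(P_1)$ and $g_2(P_2)$ into standard diagonal form must be chosen compatibly with the already-fixed complex normalization, i.e.\ we use the normal forms of Equation~(\ref{eqn-2.d}) or Equation~(\ref{eqn-2.e}) (which are $J_\pm$-adapted) rather than an arbitrary diagonalization, and we must check that the geodesic-normal-coordinate change used to kill the first derivatives of the metric does not disturb the integrable normal form (\ref{eqn-2.f}) — it will not, since it is a higher-order change of coordinates fixing the $1$-jet of the coordinate frame, but this should be remarked. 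Then set
\[
\tilde g_2:=\tfrac12\{g_3\mp J_\pm^*g_3\}.
\]
Because $g_1$ and $g_2$ are (para)-Hermitian, $J_\pm^*g_1=\mp g_1$ on $B_r$ and $J_\pm^*g_2=\mp g_2$ on $B_{2r}^c$, so the averaging operator acts as the identity on $g_1$ over $B_r$ and on $g_2$ over $B_{2r}^c$; hence $\tilde g_2=g_1$ on $B_r$ and $\tilde g_2=g_2$ on $B_{2r}^c$. By construction $J_\pm^*\tilde g_2=\mp\tilde g_2$ everywhere on $B_{3r}$, so $(\tilde g_2,J_\pm)$ is a (para)-Hermitian structure there, and since $J_\pm$ is the original integrable structure and is unchanged, the pair extends across all of $M_2$ by gluing to $(g_2,J_{2,\pm})$ on $\mathcal{U}_2^c$.

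It remains to prove the estimate $||\tilde g_2-g_2||<\epsilon$. Write $\tilde g_2-g_2=\tfrac12\{(g_3-g_2)\mp J_\pm^*(g_3-g_2)\}$, using $J_\pm^*g_2=\mp g_2$; the bounded linear operator $h\mapsto\tfrac12\{h\mp J_\pm^*h\}$ has $C^0$-operator norm controlled by a constant $C_m$ depending only on $m$ (since $J_\pm$ is the fixed standard endomorphism), so $||\tilde g_2-g_2||\le C_m||g_3-g_2||<C_m\delta$, and choosing $\delta=\delta(\epsilon):=\epsilon/(2C_m)$ gives $||\tilde g_2-g_2||<\epsilon$. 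Finally, since the set of pseudo-Riemannian metrics of signature $(p,q)$ is $C^0$-open in $C^\infty(S^2(T^*M))$, taking $\epsilon$ small (which we may always do) guarantees $\tilde g_2$ is non-degenerate of the correct signature. The only point that requires genuine attention — the ``main obstacle'' — is the coordinate-bookkeeping in the first step: verifying that one can simultaneously enforce the integrable normal form (\ref{eqn-2.f}) for $J_\pm$ and the first-derivative-vanishing plus standard-value normalization for the metric used in Theorem~\ref{thm-2.2}; everything else is a direct adaptation of the proof of Theorem~\ref{thm-2.4} with the perturbation of the (para)-complex structure deleted.
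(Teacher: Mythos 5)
Your overall strategy is exactly the paper's: the paper proves Theorem~\ref{thm-2.6} by remarking that the argument for Theorem~\ref{thm-2.4} carries over verbatim once one uses the rigidity of integrable structures (Theorem~\ref{thm-2.5}) to avoid perturbing $J_\pm$, normalizes the metrics at the base point via Equation~(\ref{eqn-2.d}) or~(\ref{eqn-2.e}), transplants the metric by Theorem~\ref{thm-2.2}, and then averages $g_3\mapsto\frac12\{g_3\mp J_\pm^*g_3\}$. Your construction and your $C^0$ estimate are the right ones.

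However, the one point you single out as the ``main obstacle'' you resolve incorrectly. You assert that the geodesic-normal-coordinate change used in Theorem~\ref{thm-2.2} to kill the first derivatives of the metric does not disturb the integrable normal form~(\ref{eqn-2.f}) because it is ``a higher-order change of coordinates fixing the $1$-jet of the coordinate frame.'' This is false: a coordinate change $x\mapsto x+Q(x)$ with $Q$ quadratic alters the coordinate frame $\partial_{x_i}$ to first order away from the origin, so $J_\pm$ expressed in the new coordinates is constant only \emph{at} $P$, not on the whole chart. In general a Hermitian metric admits no holomorphic coordinates in which its first derivatives vanish at $P$ --- Proposition~\ref{prop-2.1}(4) shows that the simultaneous normalization you want characterizes the (para)-K\"ahler case, and the paper says explicitly in the proof of Theorem~\ref{thm-2.7} that this is ``exactly the step which fails in examining the (para)-Hermitian setting and is why we only obtain $C^0$ norm estimates and not $C^1$ norm estimates in those cases.'' If you actually performed that coordinate change, $J_{1,\pm}$ and $J_{2,\pm}$ would no longer coincide as fields on $B_{3r}$, and your averaging step would no longer return $g_1$ on $B_r$ (since $g_1$ is Hermitian for $J_{1,\pm}$, not for $J_{2,\pm}$). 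The repair is simple and is what the paper implicitly does: since Theorem~\ref{thm-2.6} only claims a $C^0$ bound, drop the first-derivative normalization entirely; a \emph{linear} change of the holomorphic coordinates suffices to arrange $g_1(0)=g_2(0)$ in the $J_\pm$-adapted normal form, and continuity then gives $\|g_1-g_2\|_{3r}<\delta$ after shrinking $r$, which is all your estimate needs.
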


\subsection{(Para)-K\"ahler geometry}\label{sect-2.7}
Let $(M,g,J_\pm)$ be an almost (para)-Hermitian manifold of signature $(p,q)$. Define the
vector bundles $\Lambda_\mp^2(M)$ and $S_\mp^2(M)$ and define
the {\it (para)-K\"ahler form} $\Omega_\pm\in C^\infty(\Lambda_\mp^2(M))$
by setting:
\begin{equation}\label{eqn-2.g}
\begin{array}{l}
S_\mp^2(M):=\{h\in S^2(M):J_\pm^* h=\mp h\},\\
\Lambda_\mp^2(M):=\{\omega\in\Lambda^2(M):J_\pm^*\omega=\mp\omega\},\vphantom{\vrule height 12pt}\\
\Omega_\pm(x,y):=g(x,J_\pm y)\in C^\infty(\Lambda_\mp^2(M))\,.\vphantom{\vrule height 12pt}
\end{array}\end{equation}
Let $\nabla$ be the Levi-Civita connection of $g$.
One has the following useful result -- see, for example, the discussion in \cite{BGN12,KN63}.
\begin{proposition}\label{prop-2.1}
Let $\nabla^g$ be the Levi-Civita connection of an almost (para)-Herm\-itian
manifold $(M,g,J_\pm)$. The following conditions are equivalent
and if any is satisfied, then $(M,g,J_\pm)$ is said to be a {\rm(para)-K\"ahler manifold} and $(g,J_\pm)$ is said
to be a {\rm (para)-K\"ahler} structure.
\begin{enumerate}
\item  $\nabla^g\Omega_\pm=0$.
\item $\nabla^g J_\pm=0$.
\item $J_\pm$ is integrable and $d\Omega_\pm=0$.
\item For every point $P$ of $M$, there exists a coordinate system centered at $P$ so that
 Equation~{\rm(\ref{eqn-2.f})} is satisfied, so that
Equation~{\rm(\ref{eqn-2.d})} is satisfied in the complex
setting and Equation~{\rm(\ref{eqn-2.e})} is satisfied in the para-complex setting, and so that
the first derivatives of the metric vanish at $P$.\end{enumerate}
\end{proposition}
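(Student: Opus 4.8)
The plan is to prove the cycle of implications $(1)\Leftrightarrow(2)$, $(2)\Rightarrow(3)$, $(3)\Rightarrow(4)$, and $(4)\Rightarrow(2)$; together these yield all of the asserted equivalences. The equivalence $(1)\Leftrightarrow(2)$ is formal: since $\nabla^g g=0$, differentiating the identity $\Omega_\pm(X,Y)=g(X,J_\pm Y)$ gives $(\nabla^g_Z\Omega_\pm)(X,Y)=g(X,(\nabla^g_Z J_\pm)Y)$, and the non-degeneracy of $g$ shows that one of $\nabla^g\Omega_\pm$, $\nabla^g J_\pm$ vanishes if and only if the other does. For $(2)\Rightarrow(3)$ I would use that $\nabla^g$ is torsion free: this lets one rewrite the Lie brackets occurring in the Nijenhuis tensor as differences of covariant derivatives, after which a routine manipulation exhibits $N_{J_\pm}$ as a universal first-order linear expression in $\nabla^g J_\pm$, so $\nabla^g J_\pm=0$ forces $N_{J_\pm}=0$ and hence integrability. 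Likewise, for a torsion free connection the exterior derivative of a $2$-form is (up to a universal constant) the alternation of its covariant derivative, so $\nabla^g\Omega_\pm=0$ gives $d\Omega_\pm=0$.

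The substantive step is $(3)\Rightarrow(4)$, and this is where the (para)-K\"ahler potential enters. Assuming $J_\pm$ integrable, I would first choose coordinates centered at $P$ satisfying Equation~(\ref{eqn-2.f}), and then make a (para)-holomorphic linear change of those coordinates to arrange Equation~(\ref{eqn-2.d}) in the complex setting, respectively Equation~(\ref{eqn-2.e}) in the para-complex setting, at the point $P$. Written in the associated (para)-holomorphic frame, the hypothesis $d\Omega_\pm=0$ together with integrability is precisely the closedness condition that produces, via the $\partial\bar\partial$-lemma in the complex case and its formal para-complex analogue, a locally defined potential $F$ near $P$ whose mixed second derivatives recover the metric. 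One then uses the residual freedom to modify $F$ by the real part of a (para)-holomorphic function -- equivalently, to perform a (para)-holomorphic quadratic change of coordinates -- so as to annihilate the first derivatives of the metric at $P$; the symmetry of $\partial_k g_{i\bar j}(P)$ in the indices $k$ and $i$, which is exactly the content of $d\Omega_\pm=0$, is what makes these terms absorbable into such a change. This produces a chart with all the properties demanded in (4). Finally $(4)\Rightarrow(2)$ is immediate: in such a chart the components of $J_\pm$ are constant, and since the first derivatives of $g$ vanish at $P$ the Christoffel symbols of $\nabla^g$ vanish at $P$ as well, so $(\nabla^g J_\pm)(P)=0$; as $P$ was arbitrary, $\nabla^g J_\pm\equiv0$.

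I expect the main obstacle to be the construction in $(3)\Rightarrow(4)$: producing the local potential $F$ and then carrying out the potential/coordinate adjustment that kills the first derivatives of $g$ at $P$ without disturbing Equation~(\ref{eqn-2.f}) or the normalization~(\ref{eqn-2.d})/(\ref{eqn-2.e}). The para-complex case requires replacing the $\partial\bar\partial$-lemma by its para-holomorphic counterpart, but the bookkeeping is formally identical, so once the complex case is in hand the para case follows in the same way.
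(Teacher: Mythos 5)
Your outline is correct and follows the standard route: the paper itself offers no proof of Proposition~\ref{prop-2.1}, deferring instead to \cite{BGN12,KN63}, and the chain you propose --- $(1)\Leftrightarrow(2)$ from $\nabla^g g=0$ and nondegeneracy, $(2)\Rightarrow(3)$ by rewriting $N_{J_\pm}$ and $d\Omega_\pm$ through covariant derivatives of the torsion-free connection, $(3)\Rightarrow(4)$ via adapted (para)-holomorphic coordinates and a quadratic (para)-holomorphic change absorbing the first derivatives of the metric at $P$ (whose absorbability is exactly the symmetry encoded by $d\Omega_\pm=0$), and $(4)\Rightarrow(2)$ pointwise --- is precisely the argument carried out in those references. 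The one step worth spelling out in a full write-up is the para-complex half of $(3)\Rightarrow(4)$: integrability of $J_+$ must first be converted via Frobenius into coordinates adapted to the two eigendistributions before the $d_+d_-$ analogue of the local $\partial\bar\partial$-lemma applies, but as you note the bookkeeping is then formally identical to the complex case.
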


\begin{theorem}\label{thm-2.7}
Let $\epsilon>0$ be given. Let $(g_1,J_{1,\pm})$ be the germ of a (para)-K\"ahler
structure near $P_1\in M_1$ of signature $(p,q)$ and let $(g_2,J_{2,\pm})$ be a
(para)-K\"ahler structure of signature $(p,q)$ on $M_2$.
There exists a pseudo-Riemannian metric $\tilde g_2$ on $M_2$ so that
$(\tilde g_2,J_{2,\pm})$ is a K\"ahler structure on $M_2$, so that
$(\tilde g_2, J_{2,\pm})$ is isomorphic to $(g_1,J_{1,\pm})$ near $P_2$,
so that $\tilde g_2=g_2$ away from $P_2$, and so that
$||\tilde g_2-g_2||^1<\epsilon$.
\end{theorem}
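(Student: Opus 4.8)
The plan is to exploit \emph{(para)-K\"ahler potentials}, as flagged in the discussion preceding Section~\ref{sect-2.1}. A convex combination of (para)-K\"ahler metrics need not be (para)-K\"ahler, but for a fixed integrable (para)-complex structure a convex combination of local K\"ahler potentials always produces a (para)-K\"ahler metric, so I would interpolate at the level of potentials rather than metrics. For the normalization step, first apply Theorem~\ref{thm-2.5}: since (para)-complex structures are rigid I may choose the coordinates $\vec x$ and $\vec y$ so that $J_{1,\pm}$ and $J_{2,\pm}$ are both the standard (para)-complex structure of Equation~(\ref{eqn-2.f}); thus after identifying $P_1=P_2=0$ we have $J_{1,\pm}=J_{2,\pm}=:J_\pm$ on $B_{3r}$ and the (para)-complex structure is never perturbed. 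By Proposition~\ref{prop-2.1}(4) I may further arrange that Equation~(\ref{eqn-2.d}) holds in the complex setting and Equation~(\ref{eqn-2.e}) in the para-complex setting, and that the first derivatives of $g_1$ and of $g_2$ vanish at $0$; hence $g_1(0)=g_2(0)$ and the first derivatives of $g_1-g_2$ vanish at $0$.

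Next, after shrinking $r$ so that the germ $(g_1,J_{1,\pm})$ is represented on $B_{3r}$, choose real potentials $f_1,f_2$ on $B_{3r}$ with $\Omega_{\nu,\pm}=i\,\partial\bar\partial f_\nu$ in the complex setting and $\Omega_{\nu,+}=\partial\bar\partial f_\nu$ (up to a fixed normalizing constant) in the para-complex setting; existence is the local $\partial\bar\partial$-lemma on a ball -- see \cite{BGN12,CFG96,KN63}. Subtracting an affine function I may take $f_\nu(0)=0$ and $df_\nu(0)=0$, so the quadratic Taylor coefficients of $f_\nu$ at $0$ are determined by $g_\nu(0)$ and therefore agree for $\nu=1,2$. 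Writing $g_{\nu,a\bar b}=\partial_a\partial_{\bar b}f_\nu$, the vanishing of the first derivatives of $g_\nu$ at $0$ forces every third-order derivative of $f_\nu$ at $0$ having at least one holomorphic and one antiholomorphic index to vanish; the remaining, pluriharmonic, part of the cubic Taylor polynomial of $f_\nu$ is the real part of a holomorphic cubic and may be subtracted without changing $\Omega_{\nu,\pm}$. Hence I may assume $f_1$ and $f_2$ have the same $3$-jet at $0$, so $f_1-f_2$ vanishes to order $4$ there; consequently there is a constant $C$, independent of small $r$, with $\sup_{B_{3r}}|\partial^\alpha(f_1-f_2)|\le Cr^{4-|\alpha|}$ for $|\alpha|\le 4$.

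Now set $\tilde f:=f_2+\phi_r(f_1-f_2)$ and let $\tilde g_2$ be the metric determined by $J_\pm$ and $\tilde\Omega_\pm:=i\,\partial\bar\partial\tilde f$ (resp. $\partial\bar\partial\tilde f$); thus $\tilde g_2-g_2$ is the symmetric $2$-tensor built from the second derivatives of $\phi_r(f_1-f_2)$. Since $\phi_r\equiv 1$ on $B_r$ and $\phi_r\equiv 0$ on $B_{2r}^c$ we get $\tilde g_2=g_1$ on $B_r$ and $\tilde g_2=g_2$ on $B_{2r}^c$, so $\tilde g_2$ extends to $M_2$ to agree with $g_2$ off $\mathcal{U}_2$. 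Both $\tilde g_2-g_2$ and its first derivatives are finite sums of terms $(\partial^\alpha\phi_r)\cdot(\partial^\beta(f_1-f_2))$ with $|\alpha|+|\beta|\le 3$; since $\phi_r(x)=\phi(x/r)$ gives $\sup_{B_{3r}}|\partial^\alpha\phi_r|\le C_\alpha r^{-|\alpha|}$ (the higher-order analogue of Equation~(\ref{eqn-2.a})), each such term is $O(r^{4-|\beta|-|\alpha|})=O(r)$, so $||\tilde g_2-g_2||^1_{3r}\le Cr<\epsilon$ once $r$ is small. Because $g_2$ is non-degenerate of signature $(p,q)$ on a fixed compact neighborhood of $0$ and $||\tilde g_2-g_2||_{3r}$ is small, $\tilde g_2$ is non-degenerate of signature $(p,q)$; and $(\tilde g_2,J_\pm)$ is automatically almost (para)-Hermitian, $J_\pm$ is integrable, and $d\tilde\Omega_\pm=0$ since $\tilde\Omega_\pm$ is a constant times $\partial\bar\partial\tilde f$, so $(\tilde g_2,J_\pm)$ is (para)-K\"ahler by Proposition~\ref{prop-2.1}(3).

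The main obstacle I anticipate is the potential-normalization step: arranging that the local potentials $f_1,f_2$ can be chosen with matching $3$-jets at the origin, so that $f_1-f_2=O(|x|^4)$ and passing from potentials to metrics costs only one order of decay. Once this is in place, the rest is the same mesa-function chain-rule bookkeeping as in Theorem~\ref{thm-2.2}, simply carried two derivatives further; the non-degeneracy, integrability, and closedness conclusions are then immediate from Proposition~\ref{prop-2.1}.
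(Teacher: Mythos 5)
Your proposal is correct and follows essentially the same route as the paper: normalize via Proposition~\ref{prop-2.1}(4), pass to a local (para)-K\"ahler potential whose relevant jet at the origin can be killed so that the difference potential vanishes to fourth order, glue with the mesa function at the level of potentials, and recover the metric from $i\partial\bar\partial$ (resp.\ $d_+d_-$) with the same $O(r)$ bookkeeping for the $C^1$ estimate and the same non-degeneracy argument. The only cosmetic difference is that the paper takes a single potential for the difference $\Delta_\pm=\Omega_{\pm,g_1}-\Omega_{\pm,g_2}$ and normalizes its Taylor expansion by matching homogeneous terms, whereas you take separate potentials $f_1,f_2$ and match their $3$-jets directly; these are equivalent.
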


\begin{proof}By Proposition~\ref{prop-2.1}~(4), we may choose coordinate systems $\vec x$ on $M_1$ and $\vec y$ on $M_2$
so that $g_1(P_1)=g_2(P_2)$ under the identification
described in Equation~(\ref{eqn-2.e}) or Equation~(\ref{eqn-2.d}), so that
$J_{1,\pm}(P_1)=J_{2,\pm}(P_2)$ under the identification
described in Equation~(\ref{eqn-2.f}), and so that the first derivatives of the metrics vanish at $P_1$ and at $P_2$.
(This is, of course, exactly the step which
fails in examining the (para)-Hermitian setting and is why
we only obtain $C^0$ norm estimates and not $C^1$ norm estimates in those cases.)
We then have for $r$ sufficiently small that
$$||\Omega_{\pm,g_1}-\Omega_{\pm,g_2}||_{3r}\le Cr^2\text{ and }
||\Omega_{\pm,g_1}-\Omega_{\pm,g_2}||_{3r}^1\le Cr\,.$$

Unfortunately, the convex combination of (para)-K\"ahler
metrics is no longer a (para)-K\"ahler metric so we must proceed a bit differently.
We shall exploit a correspondence between $S_\mp^2(M)$ and $\Lambda_\mp^2(M)$.
 If $h\in C^\infty(S_\mp^2(M))$, then we may define $\omega_{\pm,h}\in C^\infty(\Lambda_\mp(M)$
 by setting $\omega_{\pm,h}(x,y):=h(x,J_\pm y)$;
$h$ may be recovered from $\omega_{\pm,h}$ since $h(x,y)=\pm\omega_h(x,J_\pm y)$.
Instead of finding $\tilde g_2$ directly, we will find
$\tilde\omega_\pm\in C^\infty(\Lambda_\mp^2(B_{3r}))$ so that
\begin{equation}\begin{array}{ll}\label{eqn-2.h}
\tilde\omega_\pm=\Omega_{\pm,g_1}\text{ on }B_{r},&
\tilde\omega_\pm=\Omega_{\pm,g_2}\text{ on }B_{2r}^c,\\
||\tilde\omega_\pm-\Omega_{\pm,g_2}||_{3r}^1<\epsilon,&
d\tilde\omega_\pm=0\,.\vphantom{\vrule height 11pt}
\end{array}\end{equation}
We will then set $\tilde g(x,y)=\pm\tilde\omega_\pm(x,J_\pm y)$ and conclude
that $(\tilde g_2,J_{2,\pm})$ is a (para)-Hermitian
manifold with:
$$
\tilde g_2=g_1\text{ on }B_{r},\quad
\tilde g_2=g_2\text{ on }B_{2r}^c,\quad
||\tilde g_2-g_2||_{3r}^1<\epsilon\,.
$$
The estimate $||\tilde g_2-g_2||_{3r}^1<\epsilon$ shows that $\tilde g_2$ is non-degenerate. The identity
$d\Omega_{\pm,\tilde g_2}=d\tilde\omega_\pm=0$ shows that
$(\tilde g_2,J_{2,\pm})$ is
(para)-K\"ahler structure on $M_2$
and completes the proof. We now proceed with the construction of $\tilde\omega_\pm$. Set:
$$\Delta_\pm:=\Omega_{\pm,g_1}-\Omega_{\pm,g_2}\text{ on }B_{3r}\,.$$
We then have that
$$
d\Delta_\pm=d\Omega_{\pm,g_1}-d\Omega_{\pm,g_2}=0,\quad
||\Delta_\pm||_{3r}^1\le Cr,\quad\text{ and }\quad
||\Delta_\pm||_{3r}\le Cr^2\,.
$$

We first work in the complex setting. Let $\partial$ and $\bar\partial$ be the {\it Dolbeault operators}.
The Dolbeault cohomology of $B_{3r}$ vanishes
so we may find $f_-\in C^\infty(B_{3r})$ so that
$$\Delta_-=\sqrt{-1}\partial\bar\partial f_-\,.$$
The analogue of $\partial$ and $\bar\partial$ in para-complex
geometry are the operators $d_\pm$. But again, we can find a
function $f_+\in C^\infty(B_{3r})$ so that
$$\Delta_+=d_+d_-f_+\,.$$
We refer to \cite{CMMS04,H11,KN63} for further details concerning the {\it (para)-K\"ahler
potential} $f_\pm$. To have a uniform
notation, we set
 $$A_+=d_+,\quad A_-=\sqrt{-1}\partial,\quad B_+=d_-,\quad B_-=\bar\partial\,.$$
Expand $f_\pm$ and $\Delta_\pm$ in Taylor series about $0$ in the form
$$
\textstyle\Delta_\pm\sim\sum_\nu\Delta_{\pm,\nu}\quad\text{ and }\quad f_\pm\sim\sum_\nu f_{\pm,\nu}
$$
where $\Delta_{\pm,\nu}$ and $f_{\pm,\nu}$ are polynomials
which are homogeneous of
degree $\nu$. Since $g_1(0)=g_2(0)$ and $dg_1(0)=dg_2(0)=0$, we have
$\Delta_{\pm,\nu}=0$ for $\nu=1,2$. Equating terms of suitable homogeneity in the expansions
$\sum_\mu A_\pm B_\pm\ f_{\pm,\mu}\sim\sum_\nu\Delta_{\pm,\nu}$ yields:
\begin{eqnarray*}
&&A_\pm B_\pm f_{\pm,0}=0,\quad A_\pm B_\pm f_{\pm,1}=0,\quad
A_\pm B_\pm f_{\pm,2}=\Delta_{\pm,0}=0,\\
&&A_\pm B_\pm f_{\pm,3}=\Delta_{\pm,1}=0\,.
\end{eqnarray*}
Consequently we may assume that
 $f_{\pm,0}=f_{\pm,1}=f_{\pm,2}=f_{\pm,3}=0$. This implies that there exists a constant $C$ so that if $0<t<1$, then we have:
$$
 ||\partial_{x_j}\partial_{x_k} f_\pm||_{3rt}<C\cdot t^2\text{ and }
   ||\partial_{x_i}\partial_{x_j}\partial_{x_k}f_\pm||_{3rt}<C_0t\,.\vphantom{\vrule height 11pt}
$$
The same technique used to establish Theorem~\ref{thm-2.2} permits us to estimate
\begin{equation}\label{eqn-2.i}
||A_\pm B_\pm\phi_{rt}f_\pm||\le Ct^2\text{ and }||\partial_iA_\pm B_\pm\phi_{rt}f_\pm||\le Ct\,.
\end{equation}
Set
$$\tilde\omega_\pm:=\Omega_{\pm,g_2}+A_\pm B_\pm \phi_{rt}f_{\pm}\,.$$
We must verify that the relations of Equation~(\ref{eqn-2.h})
are satisfied where we replace $r$ by $rt$.
On $B_{rt}$, we have $\phi_{rt}=1$. Consequently
\begin{eqnarray*}
&&\left.\tilde\omega_\pm\right|_{B_{rt}}=
\left.\{\Omega_{\pm,g_2}+A_\pm B_\pm\phi_{rt}f_\pm\}\right|_{B_{rt}}
=\{\Omega_{\pm,g_2}+A_\pm B_\pm f_\pm\}|_{B_{rt}}\\
&=&\left\{\Omega_{\pm,g_2}+\Delta_\pm\}\right|_{B_{rt}}=
\left\{\Omega_{\pm,g_2}+\Omega_{\pm,g_1}-\Omega_{\pm,g_2}\}\right|_{B_{rt}}
=\left.\Omega_{\pm,g_1}\right|_{B_{rt}}\,.
\end{eqnarray*}
On $B_{2rt}^c$ we have $\phi_{rt}=0$. Consequently
$$
\left.\tilde\omega_\pm\right|_{B_{2rt}^c}=
\left.\{\Omega_{\pm,g_2}+A_\pm B_\pm\phi_{rt}f_\pm\}\right|_{B_{2rt}^c}
=\left\{\Omega_{\pm,g_2}+0\}\right|_{B_{2rt}^c}\,.
$$
The identity $\tilde\omega_{\pm}=0$ follows from the fact that $d\bar\partial\partial=0$ in
the complex setting and $dd_+d_-=0$ in the para-complex setting. The desired estimate for $||\tilde g_2-g_2||^1$ follows from Equation~(\ref{eqn-2.i}).
\end{proof}

\subsection{Weyl structures}
We refer to \cite{BGN12,GNS11} for background information concerning Weyl
geometry.
A {\it Weyl structure} on $M$ is a pair $(M,\nabla)$ where $g$ is a pseudo-Riemannian
 metric on $M$ and where $\nabla$ is a torsion free
connection on $M$ so that $\nabla g=-2\omega\otimes g$ for some smooth $1$-form $\omega$.
These structures were
introduced by Weyl~\cite{W22} in an attempt to unify gravity
with electromagnetism. Although this approach failed for physical reasons,
these geometries are still studied for their intrinsic interest.
This is a conformal theory. If $\tilde g=e^{2f}g$ is a conformally equivalent metric,
then $(M,\tilde g,\nabla)$ also is a Weyl
structure where the associated $1$-form is given by setting
$\tilde\omega=\omega+2df$. The triple $(M,g,\nabla)$ is called a {\it Weyl manifold}
and $(g,\nabla)$ is said to give $M$ a {\it Weyl structure}.

\begin{theorem}
Let $(g_1,\nabla_1)$ be the germ of an  Weyl structure of signature $(p,q)$ at $P_1\in M_1$
 and let $(g_2,\nabla_2)$ be a Weyl structure of signature $(p,q)$ on $M_2$. There exists
a Weyl structure $(\tilde g_2,\nabla_2)$ on $M_2$ so
that $(\tilde g_2,\tilde\nabla_2)$ is equal to $(g_2,\nabla_2)$ away from $P_2$ and
so that $(\tilde g_2,\tilde\nabla_2)$ is isomorphic to $(g_1,\nabla_1)$ near $P_2$.
\end{theorem}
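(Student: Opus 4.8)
The plan is to exploit the standard correspondence between Weyl structures and pairs $(g,\omega)$ consisting of a pseudo-Riemannian metric $g$ and a smooth $1$-form $\omega$. Given such a pair, there is a unique torsion free connection $\nabla=\nabla(g,\omega)$ on $M$ satisfying $\nabla g=-2\omega\otimes g$; with the sign convention of the paper it is given explicitly by
\[
\nabla_XY=\nabla^g_XY+\omega(X)Y+\omega(Y)X-g(X,Y)\,\omega^\sharp,
\]
where $\nabla^g$ is the Levi-Civita connection of $g$ and $\omega^\sharp$ is the $g$-dual vector field of $\omega$; see \cite{BGN12,GNS11}. Conversely, if $(g,\nabla)$ is a Weyl structure then the defining relation $\nabla g=-2\omega\otimes g$ determines $\omega$ uniquely (contract $\nabla g$ with the inverse metric), and by uniqueness $\nabla=\nabla(g,\omega)$. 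Thus $\nabla_1=\nabla(g_1,\omega_1)$ and $\nabla_2=\nabla(g_2,\omega_2)$ for uniquely determined $1$-forms $\omega_1,\omega_2$. Since the right-hand side of the displayed formula is a local expression in $g$, $\omega$, and the first derivatives of $g$, transplanting the Weyl structure $(g_1,\nabla_1)$ reduces to transplanting the metric $g_1$ and the $1$-form $\omega_1$ separately and then reassembling the connection.

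First I would transplant the metric. Applying Theorem~\ref{thm-2.2} to $g_1$ and $g_2$ supplies normalized coordinate systems, an identification $\mathcal{U}_1=\mathcal{U}_2=B_{3r}$ with $P_1=P_2=0$, and a pseudo-Riemannian metric $\tilde g_2$ of signature $(p,q)$ on $M_2$ with $\tilde g_2=g_1$ on $B_r$, $\tilde g_2=g_2$ on $B_{2r}^c$, and $||\tilde g_2-g_2||<\epsilon$; in particular $\tilde g_2$ is non-degenerate everywhere once $\epsilon$ is small. Next I would transplant the $1$-form: in the coordinates just fixed, regard $\omega_1$ and $\omega_2$ as $1$-forms on $B_{3r}$ and set $\tilde\omega_2:=\phi_r\omega_1+(1-\phi_r)\omega_2$. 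Because the space of $1$-forms carries no positivity or integrability constraint, $\tilde\omega_2$ is again a smooth $1$-form on $B_{3r}$; it equals $\omega_1$ on $B_r$ and equals $\omega_2$ on $B_{2r}^c$, hence extends smoothly to all of $M_2$ by $\omega_2$ on $\mathcal{U}_2^c$. No estimate on $\tilde\omega_2$ is needed.

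Finally I would set $\tilde\nabla_2:=\nabla(\tilde g_2,\tilde\omega_2)$. By construction this is a smooth torsion free connection on $M_2$ with $\tilde\nabla_2\tilde g_2=-2\tilde\omega_2\otimes\tilde g_2$, so $(\tilde g_2,\tilde\nabla_2)$ is a Weyl structure of signature $(p,q)$. On $B_r$ we have $(\tilde g_2,\tilde\omega_2)=(g_1,\omega_1)$, so $\tilde\nabla_2=\nabla(g_1,\omega_1)=\nabla_1$ there; on $B_{2r}^c$, and on $\mathcal{U}_2^c$, we have $(\tilde g_2,\tilde\omega_2)=(g_2,\omega_2)$, so $\tilde\nabla_2=\nabla_2$. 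Hence $(\tilde g_2,\tilde\nabla_2)$ is isomorphic to $(g_1,\nabla_1)$ near $P_2$ and agrees with $(g_2,\nabla_2)$ away from $P_2$, as required.

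Unlike the (para)-K\"ahler situation of Theorem~\ref{thm-2.7}, the auxiliary datum distinguishing a Weyl structure from a bare metric is an unconstrained $1$-form, so a naive convex interpolation suffices and no potential-type construction is needed; there is essentially no obstacle. The only point requiring any care is that $\tilde g_2$ remain non-degenerate across the transition annulus $B_{2r}\setminus B_r$, which is exactly what the $C^0$ estimate of Theorem~\ref{thm-2.2} guarantees. (Should one also want $||\tilde g_2-g_2||^1<\epsilon$ together with a bound on $\tilde\nabla_2-\nabla_2$, one would invoke the $C^1$ estimate of Theorem~\ref{thm-2.2} and shrink $r$, exactly as in the affine case of Theorem~\ref{thm-2.1}; the statement above does not require this.)
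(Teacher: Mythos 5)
Your proof is correct, but it takes a different route from the paper's. You parametrize Weyl connections by their associated $1$-forms via the explicit formula $\nabla_XY=\nabla^g_XY+\omega(X)Y+\omega(Y)X-g(X,Y)\omega^\sharp$, transplant $g$ by Theorem~\ref{thm-2.2} and $\omega$ by the naive convex combination $\phi_r\omega_1+(1-\phi_r)\omega_2$, and then reassemble the connection; since the formula is local and natural in $(g,\omega)$, the resulting $\tilde\nabla_2$ automatically agrees with $\nabla_1$ on $B_r$ and with $\nabla_2$ on $B_{2r}^c$. The paper instead works on $B_{5r}$ with a three-set partition of unity and forms $\tilde\nabla_2=\psi_1\nabla_1+\psi_2\nabla_{\tilde g_2}+\psi_3\nabla_2$, using the Levi-Civita connection of the transplanted metric as a bridge on a middle annulus so that in each region one is only convex-combining Weyl connections \emph{for the same metric}. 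The two arguments rest on the same underlying linearity (an affine combination of Weyl connections for a fixed $g$ is again a Weyl connection for $g$, which is exactly the affine structure of the $\omega$-parametrization), but yours avoids enlarging the ball and the case-by-case check of the three regions, at the cost of having to write down and verify the explicit formula and its uniqueness; the paper's version needs no formula at all.

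One caveat: your closing parenthetical, suggesting that by shrinking $r$ one could additionally arrange $||\tilde\nabla_2-\nabla_2||<\epsilon$ ``exactly as in the affine case,'' is wrong, and the paper has a Remark devoted to precisely this point. In the affine case both connections can be normalized to have vanishing Christoffel symbols at the base point, so their difference vanishes at the origin and shrinking $r$ works. Here $\tilde\nabla_2-\nabla_2$ at $P_2$ is governed by $\omega_1(P_1)-\omega_2(P_2)$, and $g(\omega_1-\omega_2,\omega_1-\omega_2)$ at the identified base point is a coordinate-independent invariant that no normalization or shrinking of $r$ can remove. Since the statement you were asked to prove contains no estimate on the connection, this does not affect the validity of your proof, but the parenthetical claim should be deleted.
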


\begin{remark}\rm
It is not possible to choose $(\tilde g_2,\tilde\nabla_2)$ so that both of the following estimates hold:
$$||g_2-\tilde g_2||_{3r}<\epsilon\text{ and }
||\nabla_2-\tilde\nabla_2||_{3r}<\epsilon\,.$$
The associated $1$ forms $\omega_i$ defined by the identity $\nabla_ig_i=\omega_i\otimes g_i$
are invariants of the theory.
If we assume $g_1(P_1)=g_2(P_2)$, then
$$g_2(P_2)(\omega_1-\omega_2,\omega_1-\omega_2)=g_2(P_2)(\tilde\omega_2-\omega_2,\tilde\omega_2-\omega_2)$$ is an invariant which can in general be made arbitrarily small.
\end{remark}

\begin{proof} We replace $B_{3r}$ by $B_{5r}$ and use the construction used to prove
Theorem~\ref{thm-2.2}
to choose a pseudo-Riemannian metric $g_2$ on $M_2$ and so that $\tilde g_2=g_1$ on $B_{2r}$ so
that $\tilde g_2=g_2$ on $B_{3r}^c$. We generalize the argument of Section~\ref{sect-2.1}
to this setting. Let $\{\psi_1,\psi_2,\psi_3\}$ be
a partition of unity on $M$ which is subordinate to the cover $\{B_{2r},B_{4r}\cap B_r^c,B_{3r}^c\}$.
Let $\nabla_{\tilde g_2}$ be the Levi-Civita connection of $\tilde g_2$.
Form:
$$
\tilde\nabla_2=\psi_1\nabla_1+\psi_2\nabla_{\tilde g_2}+\psi_3\nabla_2\,.
$$
The convex combination of Weyl connections
for a metric $g$ is again a Weyl connection for that metric.
We argue:
\begin{enumerate}
\item On $B_{2r}$, we have $\psi_3=0$ and $\tilde g_2=g_1$ so
$\tilde\nabla_2=\psi_1\nabla_1+\psi_2\nabla_{\tilde g_2}$
is  a Weyl connection for $\tilde g_2=g_1$.
\item On $B_{2r}^c\cap B_{3r}$, we have $\psi_1=0$, $\psi_3=0$, and $\psi_2=1$ so
$\tilde\nabla_2=\nabla_{\tilde g_2}$ is a Weyl connection for $\tilde g_2$.
\item On $B_{3r}^c$, we have $\tilde g_2=g_2$ and $\psi_1=0$ so
$\tilde\nabla_2=\psi_2\nabla_{\tilde g_2}+\psi_3\nabla_2$ is a Weyl connection for $\tilde g_2=g_2$.
\end{enumerate}
Note that we do not have the $\epsilon$ condition
as $\tilde\nabla_2$ need not be close to $\nabla_2$ at $P_2$.
\end{proof}

\subsection{(para)-K\"ahler--Weyl structures}
A quadruple $(M,g,J_\pm,\nabla)$ is said to be a
{\it (para)-K\"ahler--Weyl structure} if $(M,g,J_\pm)$ is a (para)-Hermitian manifold,
if $(M,g,\nabla)$ is a Weyl structure, and if $\nabla J_\pm=0$; the triple $(g,J_\pm,\nabla)$
is said to give $M$ a {\it (para)-K\"ahler--Weyl structure}.

\begin{theorem}
Let $(g_1,J_{1,\pm},\nabla_1)$ be the germ
of a (para)-K\"ahler--Weyl structure of signature $(p,q)$ at $P_1\in M_1$
 and let $(g_2,J_{2,\pm},\nabla_2)$ be a
 (para)-K\"ahler--Weyl structure of signature $(p,q)$ on $M_2$. There exists
$(\tilde g_2,\tilde\nabla_2)$ so that
$(\tilde g_2,J_{2,\pm},\tilde\nabla_2)$ is
(para)-K\"ahler--Weyl structure  on $M_2$ with $(\tilde g_2,\tilde\nabla_2)$ equal to
$(g_2,\nabla_2)$ away from $P_2$, and so that
 $(\tilde g_2,\tilde J_{2,\pm},\tilde\nabla_2)$ isomorphic to
$(g_1,J_{1,\pm},\nabla_1)$ near $P_2$.\end{theorem}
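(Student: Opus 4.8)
The plan is to split the argument by the dimension $m=2\bar m$ and, in each range, to reduce the problem to a transplantation result already established. The essential difficulty is that, unlike for the structures treated previously, a convex combination of (para)-K\"ahler--Weyl connections is not again (para)-K\"ahler--Weyl, so the bare partition-of-unity method does not apply; what rescues it is the structure theory of these geometries, which behaves quite differently in dimensions $2$, $4$, and $\ge 6$. In dimension $2$ one observes that every (para)-Hermitian pair $(g,J_\pm)$ is automatically (para)-K\"ahler (the form $\Omega_\pm$ is closed for dimensional reasons and $J_\pm$ is integrable by hypothesis) and that every Weyl connection of $g$ already satisfies $\nabla J_\pm=0$; hence a $2$-dimensional (para)-K\"ahler--Weyl structure is just a Weyl structure carrying its rigid (para)-complex structure, and the claim follows from the transplantation theorem for Weyl structures proved above together with Theorem~\ref{thm-2.5}.

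For $m=4$ I would use the fact that every (para)-Hermitian manifold $(M,g,J_\pm)$ admits a \emph{unique} compatible Weyl connection $\nabla$ for which $(M,g,J_\pm,\nabla)$ is (para)-K\"ahler--Weyl, the associated $1$-form being a fixed universal multiple of the Lee form. First I would apply Theorem~\ref{thm-2.6} to obtain a metric $\tilde g_2$ on $M_2$ with $(\tilde g_2,J_{2,\pm})$ (para)-Hermitian, isomorphic to $(g_1,J_{1,\pm})$ near $P_2$ and equal to $(g_2,J_{2,\pm})$ away from $P_2$; then I would let $\tilde\nabla_2$ be the unique compatible Weyl connection of $(\tilde g_2,J_{2,\pm})$. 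Because this assignment is natural under diffeomorphisms, $\tilde\nabla_2$ is isomorphic to $\nabla_1$ near $P_2$ (via the isomorphism already furnished by Theorem~\ref{thm-2.6}) and equals $\nabla_2$ away from $P_2$; thus $(\tilde g_2,J_{2,\pm},\tilde\nabla_2)$ is a (para)-K\"ahler--Weyl structure on $M_2$ of the required form. Consistent with the Weyl case, no $\epsilon$-estimate is needed here.

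For $m\ge 6$ the structure theory instead tells us that the $1$-form of any (para)-K\"ahler--Weyl structure is closed, hence locally exact, so the Weyl structure is locally trivial. I would work in a coordinate ball $B_{3r}$ about $P_2$ on which $\omega_2=df_2$ with $f_2(0)=0$; then $\hat g_2:=e^{2f_2}g_2$ satisfies $\nabla_2|_{B_{3r}}=\nabla^{\hat g_2}$, so $(\hat g_2,J_{2,\pm})$ is (para)-K\"ahler on $B_{3r}$ by Proposition~\ref{prop-2.1}(2); likewise the germ of $(g_1,J_{1,\pm},\nabla_1)$ becomes, after a conformal change $\hat g_1:=e^{2f_1}g_1$, a (para)-K\"ahler structure with $\nabla_1=\nabla^{\hat g_1}$. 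The construction used to prove Theorem~\ref{thm-2.7} is local, so it applies on $B_{3r}$ to transplant $(\hat g_1,J_{1,\pm})$ into $(\hat g_2,J_{2,\pm})$, producing a (para)-K\"ahler metric $\check g_2$ on $B_{3r}$ with $\check g_2=\hat g_1$ on $B_r$ and $\check g_2=\hat g_2$ on $B_{3r}\setminus B_{2r}$. I would then set $\tilde g_2:=e^{-2f_2}\check g_2$ and $\tilde\nabla_2:=\nabla^{\check g_2}$ on $B_{3r}$, extended by $(g_2,\nabla_2)$ outside. Since $\nabla^{\check g_2}(e^{-2f_2}\check g_2)=-2\,df_2\otimes(e^{-2f_2}\check g_2)$ and $\nabla^{\check g_2}J_{2,\pm}=0$, the triple $(\tilde g_2,J_{2,\pm},\tilde\nabla_2)$ is (para)-K\"ahler--Weyl; on $B_{3r}\setminus B_{2r}$ one has $\check g_2=\hat g_2$, so $(\tilde g_2,\tilde\nabla_2)=(g_2,\nabla_2)$ there and the pieces glue; and near $P_2$ the result is $(e^{2(f_1-f_2)}g_1,J_{1,\pm},\nabla_1)$ (we may arrange the two rigid (para)-complex structures to coincide there), which is the germ $(g_1,J_{1,\pm},\nabla_1)$ written with a conformally rescaled metric, hence isomorphic to it as a Weyl structure.

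The main obstacle is conceptual rather than computational: the (para)-K\"ahler--Weyl condition is not stable under convex combination, so the partition-of-unity argument cannot be applied directly, and one is forced into the dichotomy above -- uniqueness of the compatible Weyl connection when $m=4$, closedness of the Lee form when $m\ge 6$ -- in order to fall back on the already-proved (para)-Hermitian and (para)-K\"ahler transplantations. A minor wrinkle, harmless because Weyl geometry is conformally invariant, is that the gluing in the case $m\ge 6$ delivers near $P_2$ only a metric in the conformal class of $g_1$ rather than $g_1$ itself.
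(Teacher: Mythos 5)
Your proof takes essentially the same route as the paper: for $m\ge 6$ you pass to the local conformal K\"ahler presentation $g_i=e^{2\Phi_i}h_i$ with $\nabla_i=\nabla^{h_i}$ and invoke the K\"ahler transplantation of Theorem~\ref{thm-2.7}, and for $m=4$ you use the uniqueness of the compatible Weyl connection (the anti-Lee form) together with Theorem~\ref{thm-2.6}; the $m=2$ case you add is simply not treated in the paper. The one point where your construction falls short of the stated conclusion is in the $m\ge6$ case: taking $\tilde g_2=e^{-2f_2}\check g_2$ yields $e^{2(f_1-f_2)}g_1$ near $P_2$, i.e.\ the correct Weyl structure but only the conformal class of $g_1$ rather than $g_1$ itself (and Definition~\ref{defn-2.2} asks for actual equality on $B_r$); the paper closes this by also blending the conformal factors, setting $\Phi:=\phi_r\Phi_1+(1-\phi_r)\Phi_2$ and $\tilde g_2:=e^{2\Phi}\tilde h_2$, which costs nothing since the result is still conformal to the K\"ahler metric $\tilde h_2$ (hence still (para)-K\"ahler--Weyl with the same $\tilde\nabla_2$) and still agrees with $g_2$ on $B_{2r}^c$, but now equals $g_1$ exactly on $B_r$.
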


\begin{proof} We apply the analysis of \cite{GN12} and divide the analysis into 2 cases..
We first suppose that $m\ge6$. Let $i=1,2$. Then near $P_i$ we have that
$g_i=e^{2\Phi_i}h_i$ is conformally equivalent to a K\"ahler
metric $h_i$ on $(B_{3r},J_{i,\pm})$ with $\nabla_i=\nabla^{h_i}$ being the
Levi-Civita connection of $h_i$. We apply argument used to prove Theorem~\ref{thm-2.7}
to construct a K\"ahler metric $\tilde h_2$ on $B_{3r}$ so that
$\tilde h_2=h_1$ on $B_r$ and so that $\tilde h_2=h_2$ on
 $B_{2r}^c$. We set $\tilde\nabla_2=\nabla^{\tilde h_2}$.
Then $\tilde\nabla_2=\nabla_1$ on $B_r$
and $\tilde\nabla_2=\nabla_2$ on $B_{2r}^c$; consequently,
$\tilde\nabla_2$ is globally defined and we have that $\tilde\nabla_2J_{2,\pm}=0$.
However, we can no longer conclude
that $||\tilde\nabla_2-\nabla_2||<\epsilon$ since
we have not controlled the first derivatives of $\tilde h_2-h_2$ on $B_{3r}$. We smooth
out the conformal factor setting
$$\Phi:=\phi_r\Phi_1+(1-\phi_r)\Phi_2\text{ and }
\tilde g_2=e^{2\Phi}\tilde h_2\,.$$
We then have $\tilde g_2=g_1$ on $B_r$ and
$\tilde g_2=g_2$ on $B_{2r}^c$.
As $(\tilde g_2,J_{2,\pm},\tilde\nabla_2)=(g_2,J_{2,\pm},\nabla_2)$
on $B_{2r}^c$, $(\tilde g_2,J_{2,\pm},\tilde\nabla_2)$ is a
(para)-K\"ahler--Weyl structure on $B_{2r}^c$. Since $\tilde\nabla_2$ is the
Levi-Civita connection of the K\"ahler metric $\tilde h_2$ on $B_{2r}$,
$(\tilde h_2,J_{2,\pm},\tilde\nabla_2)$ is a (para)-K\"ahler--Weyl structure on $B_{2r}$.
Since $\tilde g_2$ is conformally equivalent to $\tilde h_2$, it follows that
$(\tilde g_2,J_{2,\pm},\tilde\nabla_2)$ is (para)-K\"ahler--Weyl structure on $B_{2r}$ as well. This completes the proof if $m\ge6$.

In dimension $m=4$, every (para)-Hermitian manifold $(M,g,J_\pm)$ admits a unique
(para)-K\"ahler structure where the associated 1-form is given by the anti-Lee form
$\pm\frac12J^*\delta\Omega_\pm$.
The desired result now follows from Theorem~\ref{thm-2.6}; again,
we lose control on $||\tilde\nabla_2-\nabla_2||$ since we have
not controlled the norms of the first derivatives of $\tilde g_2-g_2$.
\end{proof}

\subsection{Geodesic completeness}\label{sect-2.10}
In Riemannian geometry, compact manifolds are necessarily geodesically complete. This is not
the case in the higher signature setting. For example, Misner \cite{M63} showed that the metric
$$ds^2:=\cos(x)(dy\circ dy-dx\circ dx)+2\sin(x)dx\circ dy$$
on the 2-dimensional torus is geodesically incomplete. Similarly, Meneghini \cite{M04}
showed the metric
$$ds^2:=du\circ dv/(u^2+v^2)\text{ on }\mathbb{R}^2-\{(0,0)\}$$
is geodesically incomplete; this may be compactified by identifying $(u,v)$ with $(2u,2v)$ to
obtain a geodesically incomplete metric on the 2-torus. Finally, taking
$\Gamma_{11}{}^1=1$ on the circle yields a geodesically incomplete affine manifold. Thus
 the following transplantation result does not follow from Theorem~\ref{thm-1.1}:

\begin{theorem}\label{thm-2.10}
Let $\epsilon>0$ be given.
\begin{enumerate}
\item Let $\nabla_1$ be the germ of a torsion free connection at $P_1\in M_1$ and let $\nabla_2$ be the usual flat connection on $\mathbb{R}^m$.
There exists a torsion free connection $\tilde\nabla_2$ on
$\mathbb{R}^m$ which is isomorphic to $\nabla_1$ on $B_\epsilon$,
which agrees with $\nabla_2$ on $B_{2\epsilon}^c$, which
satisfies the estimate $||\tilde\nabla_2-\nabla_2||<\epsilon$, and which is geodesically complete.
\item Let $g_1$ be the germ of a pseudo-Riemannian metric of
signature $(p,q)$ at $P_1\in M_1$ and let $g_2$ be a flat
pseudo-Riemannian metric of signature $(p,q)$ on $\mathbb{R}^m$.
There exists a pseudo-Riemannian metric $\tilde g_2$
on $\mathbb{R}^m$ which is isomorphic to $g_1$ on $B_\epsilon$,
which agrees with $g_2$ on $B_{2\epsilon}^c$,
which satisfies the estimate $||\tilde g_2-g_2||^1<\epsilon$, and which is geodesically complete.
\item Let $(g_1,J_{1,\pm})$ be the germ of a (para)-K\"ahler structure near $P_1\in M_1$
of signature $(p,q)$. Let $(g_2,J_{2,\pm})$
be a flat (para)-K\"ahler structure on $\mathbb{R}^m$ of signature $(p,q)$.
There exists pseudo-Riemannian metric $\tilde g_2$
on $\mathbb{R}^m$ so that $(\tilde g_2,J_{2,\pm})$ is a K\"ahler structure, so that
 $(g_1,J_{1,\pm})$ is isomorphic to $(\tilde g_2,J_{2,\pm})$ on
$B_\epsilon$, so that $\tilde g_2$ agrees with $g_2$
on $B_{2\epsilon}^c$, so that $\tilde g_2$ satisfies the estimate
 $||\tilde g_2-g_2||^1<\epsilon$, and so that $\tilde g_2$ is geodesically complete.
\end{enumerate}
\end{theorem}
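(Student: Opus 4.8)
The plan is to combine the transplantation results already established --- Theorem~\ref{thm-2.1} for part~(1), Theorem~\ref{thm-2.2} for part~(2), and Theorem~\ref{thm-2.7} for part~(3) --- with a completeness argument tailored to the flat model on $\mathbb{R}^m$, which enjoys two special features: in the standard coordinates its geodesics are affine lines, and it is invariant, up to a constant rescaling of the metric in cases~(2) and~(3), under the dilations $x\mapsto\lambda x$. The first step will be to arrange, by such a dilation, that the three radii appearing in the statement agree: apply the relevant earlier theorem to transplant the germ into a small ball $B_{2\delta}$, obtaining a structure on $\mathbb{R}^m$ that is flat outside $\overline{B_{2\delta}}$, is isomorphic to the germ on $B_\delta$, and is as close to flat as one wishes in the $C^0$ (respectively $C^1$) norm; then pull back by $x\mapsto(\delta/\epsilon)x$ and, in the metric cases, multiply the metric by the constant $(\epsilon/\delta)^2$. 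Because the flat connection is dilation--invariant, a constant rescaling of a metric is realised by a dilation, and a dilation fixes the constant--coefficient $J_{2,\pm}$ and carries (para)-K\"ahler metrics to (para)-K\"ahler metrics, the outcome will be a structure on $\mathbb{R}^m$ that is flat outside $\overline{B_{2\epsilon}}$, isomorphic to the germ on $B_\epsilon$, and satisfies the required $\epsilon$-estimate; moreover --- and this is the reason for passing through a small ball --- the $C^0$ norm of the Christoffel symbols of $\tilde\nabla_2$ (respectively of the Levi--Civita connection of $\tilde g_2$) on $\overline{B_{2\epsilon}}$ can be made as small as one pleases.

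The heart of the matter will then be the claim that a structure on $\mathbb{R}^m$ which is flat outside a closed ball $\overline B:=\overline{B_{2\epsilon}}$ and whose connection has Christoffel symbols of $C^0$ norm less than $c$ on $\overline B$ is geodesically complete, provided $c<c_0$ for a threshold $c_0=c_0(m,\epsilon)>0$. To prove this I would take an inextensible geodesic $\gamma:[0,T)\to\mathbb{R}^m$ and set $I:=\{t\in[0,T):\gamma(t)\in\overline B\}$. On each component of $[0,T)\setminus I$ the connection is the flat one, so $\gamma$ is there an affine segment of constant velocity; since $\overline B$ is convex, this forces $I$ to be an interval. If $I$ is bounded above by some $\sigma<T$, or empty, then $\gamma$ coincides past $\sigma$ with a complete affine ray, hence extends to all parameter values and $T=\infty$. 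The remaining possibility is $\sup I=T$, that is, $\gamma$ is trapped in $\overline B$ on an interval $[a,T)$; here one uses the smallness of $c$. With $V:=|\dot\gamma(a)|$ the geodesic equation gives $|\ddot\gamma|\le C_m\,c\,|\dot\gamma|^2$ while $\gamma\in\overline B$, so over a parameter interval of length comparable to $(cV)^{-1}$ the velocity stays within a bounded factor of $\dot\gamma(a)$ in both norm and direction; consequently the component of $\gamma(t)-\gamma(a)$ along $\dot\gamma(a)$ grows essentially linearly, and once $c<c_0$ the net displacement $|\gamma(t)-\gamma(a)|$ would exceed $\operatorname{diam}\overline B=4\epsilon$ before that interval runs out --- which is impossible while $\gamma\subset\overline B$. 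Hence $\gamma$ must leave $\overline B$ within that interval, contradicting $\sup I=T$ if the interval lies inside $[0,T)$; and otherwise $T$ is so small that $\gamma$ and $\dot\gamma$ remain bounded on $[a,T)$, so that the geodesic extends past $T$, again contradicting inextensibility. In every case $T=\infty$.

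For part~(1) the bound $c$ on the Christoffel symbols of $\tilde\nabla_2$ is supplied directly by Theorem~\ref{thm-2.1}, and only its $C^0$ estimate is used. For parts~(2) and~(3) the Christoffel symbols of $\tilde g_2$ are linear in $\partial\tilde g_2=\partial(\tilde g_2-g_2)$, the model metric being flat, so it is precisely the $C^1$ estimates furnished by Theorem~\ref{thm-2.2} and Theorem~\ref{thm-2.7} --- which is exactly why those results were formulated with the $\|\cdot\|^1$ norm, as flagged in Definition~\ref{defn-2.1} --- that control $c$. I expect the main obstacle to be the trapped case of the previous paragraph: in indefinite signature the speed $|\dot\gamma|$ is not controlled by the conserved quantity $\tilde g(\dot\gamma,\dot\gamma)$, so nothing prevents $|\dot\gamma|$ a priori from blowing up in finite parameter time while $\gamma$ stays in the bounded region $\overline B$ --- precisely the incompleteness mechanism behind the examples of Misner and Meneghini recalled above. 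The resolution is that the perturbation region may be taken to be a small ball: its convexity prevents a geodesic from re-entering once it has escaped, and the smallness of the Christoffel symbols keeps the velocity nearly constant over the short parameter interval during which the geodesic could conceivably remain trapped --- long enough for its own motion to carry it out of the ball.
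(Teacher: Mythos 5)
Your proposal is essentially the paper's own argument: the paper also reduces Theorem~\ref{thm-2.10} to the $C^0$/$C^1$ estimates of Theorems~\ref{thm-2.1}, \ref{thm-2.2}, and \ref{thm-2.7} together with a lemma stating that a torsion free connection on $\mathbb{R}^m$ which agrees with the flat connection outside a small ball and has Christoffel symbols of small $C^0$ norm is geodesically complete, and it proves that lemma exactly as you do --- once a geodesic leaves the (convex) ball it never returns, and a geodesic trapped in the ball has $|\ddot\gamma|\le C_m\,c\,|\dot\gamma|^2$, so its velocity varies slowly enough that the displacement along $\dot\gamma(0)$ grows linearly and forces an exit before the speed can double. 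The one step you add that the paper does not --- and the only flawed one --- is the dilation-plus-constant-rescaling used to make the three radii equal: for a general germ $g_1$ the metric $(\epsilon/\delta)^2\Phi_{\delta/\epsilon}^*g_1$ is only homothetic, not isometric, to $g_1$ (its sectional curvatures are scaled by $(\delta/\epsilon)^2$), so in cases (2) and (3) the rescaled structure is no longer isomorphic to the germ; this step is unnecessary and should simply be dropped, reading ``isomorphic to $g_1$ on $B_\epsilon$'' as ``isomorphic to $g_1$ near $0$'' as the paper implicitly does.
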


Theorem~\ref{thm-2.10} will follow from the $C^0$ estimate of Theorem~\ref{thm-2.1},
from the $C^1$ estimate of Theorem~\ref{thm-2.2}, from the $C^1$ estimate of
 Theorem~\ref{thm-2.7}, and from the
following result:

\begin{lemma} Let $\epsilon>0$ be given with $\epsilon<\epsilon(m)$ sufficiently small. Let $\nabla_2$ be the flat connection
on $\mathbb{R}^m$. If $\tilde\nabla_2$ is a torsion free connection on $\mathbb{R}^m$ with $\tilde\nabla_2=\nabla_2$ on
$B_\epsilon^c$ and $||\tilde\nabla_2-\nabla_2||<\epsilon$, then $\tilde\nabla_2$ is geodesically complete.
\end{lemma}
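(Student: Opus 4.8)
The plan is to analyze the geodesic equation of $\tilde\nabla_2$ directly. Write $\Phi:=\tilde\nabla_2-\nabla_2$ for the difference tensor; in the standard coordinates on $\mathbb{R}^m$ the Christoffel symbols of $\tilde\nabla_2$ are $\tilde\Gamma_{ij}{}^k=\Phi_{ij}{}^k$, they are supported in the closed ball $\bar B_\epsilon$, and $|\Phi_{ij}{}^k|<\epsilon$ everywhere. A maximal geodesic $\gamma\colon(a,b)\to\mathbb{R}^m$ fails to extend past a finite endpoint $b$ only if $(\gamma(t),\dot\gamma(t))$ leaves every compact subset of $T\mathbb{R}^m$ as $t\to b^-$. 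Outside $B_\epsilon$ the connection is flat, so geodesics there are affine lines $t\mapsto p+tw$; hence if $\gamma$ avoids $B_\epsilon$ on a left neighbourhood of $b$ it is given there by such a formula and extends past $b$, contradicting maximality. So everything comes down to controlling $\gamma$ and $\dot\gamma$ while $\gamma$ lies in the small ball $B_\epsilon$. The one elementary estimate used throughout is $|\ddot\gamma|\le C_m\epsilon|\dot\gamma|^2$ with $C_m=m^{3/2}$, obtained from $\ddot\gamma^k=-\Phi_{ij}{}^k(\gamma)\dot\gamma^i\dot\gamma^j$ and Cauchy--Schwarz.

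The first step is to show that $\rho:=|\gamma|^2$ (square of the Euclidean norm) is strictly convex along every nonconstant geodesic: $\rho''=2|\dot\gamma|^2+2\langle\gamma,\ddot\gamma\rangle$ equals $2|\dot\gamma|^2$ where $\Phi=0$ and is at least $2|\dot\gamma|^2(1-C_m\epsilon^2)$ where $|\gamma|\le\epsilon$, so $\rho''>0$ everywhere once $\epsilon<\epsilon(m)$ is small (a geodesic whose velocity vanishes somewhere is constant). Hence $\{t:\gamma(t)\in B_\epsilon\}$ is a single open interval $(t_-,t_+)\subseteq(a,b)$: each geodesic meets $B_\epsilon$ in exactly one connected ``pass''. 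This is the structural observation that keeps the remaining bookkeeping trivial.

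The second step bounds the pass. Reparametrize by Euclidean arc length $s$ so that $\beta(s):=\gamma(t(s))$ has unit speed (legitimate since $|\dot\gamma|>0$); a short computation gives $|\beta''(s)|\le 2C_m\epsilon$ on the pass, since both $|\ddot\gamma|/|\dot\gamma|^2$ and $\bigl|\tfrac{d}{dt}|\dot\gamma|\bigr|/|\dot\gamma|^2$ are $\le C_m\epsilon$; likewise $\bigl|\tfrac{d}{ds}\log|\dot\gamma|\bigr|\le C_m\epsilon$. Fixing an interior arc-length parameter $s_0$ and putting $w:=\beta'(s_0)$, one has $\langle\beta'(s),w\rangle\ge 1-2C_m\epsilon|s-s_0|\ge\tfrac12$ whenever $|s-s_0|\le(4C_m\epsilon)^{-1}$, whence $|\langle\beta(s)-\beta(s_0),w\rangle|\ge\tfrac12|s-s_0|$ on that range; since $\beta$ stays in $\bar B_\epsilon$ the left side is $\le2\epsilon$, and a one-line bootstrap valid for $\epsilon<\epsilon(m)$ forces the whole pass to have arc length at most $8\epsilon$. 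Integrating $\bigl|\tfrac{d}{ds}\log|\dot\gamma|\bigr|\le C_m\epsilon$ over this short arc then shows $|\dot\gamma|$ stays within a fixed bounded factor of $v_0:=|\dot\gamma(t_-)|$ (equivalently, of $|\dot\gamma|$ at any interior point of the pass), so $v_0/2\le|\dot\gamma|\le 2v_0$ on the pass; in particular $\tfrac{ds}{dt}=|\dot\gamma|\ge v_0/2$, so the pass occupies parameter-time $\le 16\epsilon/v_0<\infty$.

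Putting the pieces together: on $(a,t_-)$ and on $(t_+,b)$ the geodesic is an affine line, hence defined for all parameter values, while on the closed pass it is bounded in $\bar B_\epsilon$ with $|\dot\gamma|$ bounded. Thus at no finite endpoint can $(\gamma,\dot\gamma)$ escape to infinity, so $a=-\infty$ and $b=+\infty$, and $\tilde\nabla_2$ is geodesically complete. The main obstacle is the second step: ruling out a geodesic that wanders for a long parameter-time, or traces out a long path, inside the tiny ball $B_\epsilon$ even though $\Phi$ is only $C^0$-small, and choosing $\epsilon(m)$ so that the curvature bound $|\beta''|\le2C_m\epsilon$, the projection inequality, and the logarithmic speed bound all fit together in the bootstrap. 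The convexity of $|\gamma|^2$ in the first step is what reduces the problem to a single pass; without it one would have to track infinitely many crossings of $B_\epsilon$ and worry about the speed compounding from one crossing to the next.
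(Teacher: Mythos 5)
Your argument is correct, and although it rests on the same core estimate as the paper's proof --- in standard coordinates the geodesic equation gives $|\ddot\gamma|\le C_m\epsilon|\dot\gamma|^2$, so the direction of $\dot\gamma$ turns only by $O(\epsilon)$ while crossing the $\epsilon$-ball and the geodesic is forced out before anything can go wrong --- the organization is genuinely different. The paper argues by contradiction on a maximal interval $[0,T)$ after normalizing $|\dot\gamma(0)|=1$: an incomplete geodesic must be trapped in $B_\epsilon$ (once it exits it is a straight line forever), incompleteness forces $|\dot\gamma|$ to exceed $2$ by the uniform ODE existence time for bounded velocity, but the smallness of $\ddot\gamma$ makes the doubling time at least $3$, during which the nearly constant direction carries $\gamma$ a distance $\ge\frac32$ out of the ball. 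You instead prove directly that the visit to $B_\epsilon$ is harmless: strict convexity of $|\gamma(t)|^2$ along any nonconstant geodesic (valid even inside the ball once $C_m\epsilon^2<1$) yields a single pass, and the arc-length reparametrization with $|\beta''|\le2C_m\epsilon$ and $\bigl|\frac{d}{ds}\log|\dot\gamma|\bigr|\le C_m\epsilon$ bounds that pass by arc length $8\epsilon$ with essentially constant speed, after which the escape lemma finishes. Your route buys two things: the convexity observation subsumes the paper's implicit claim that a geodesic which exits the ball never re-enters (which otherwise needs the convexity of the ball applied to the exterior straight line), and working in arc length makes the turning estimate independent of the speed scale, eliminating the normalization and the velocity-doubling bookkeeping. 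The paper's route is shorter because, once the geodesic is assumed trapped, only its behaviour up to the first doubling time matters. Both proofs are complete; just make sure to record, as you do parenthetically, that a geodesic with a vanishing velocity is constant, and to state the choice $16C_m\epsilon^2<1$ that closes your bootstrap.
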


\begin{proof} Let $\gamma$  be a geodesic for $\tilde\nabla_2$. If $\gamma$ never enters $B_\epsilon$, then $\gamma$ is a straight line and continues for infinite time. Suppose $\gamma(0)\in B_\epsilon$;
we can assume without loss of generality that $|\dot\gamma(0)|=1$ relative to the usual Euclidean metric on $\mathbb{R}^m$. We examine
the situation for $t>0$ as the situation for $t<0$ is analogous. Assume $\gamma$ does not extend for infinite time and let $[0,T\vphantom{(})$ be a
maximal domain. If $\gamma$ exits from $B_\epsilon$, then it continues as a straight line for infinite time and consequently $\gamma$
is trapped inside $B_\epsilon$. Let $\dot\gamma:=\partial_t\gamma$ and $\ddot\gamma:=\partial_t^2\gamma$. By compactness, there is a uniform $\delta$ so if
$|\dot\gamma(t)|\le 2$, then $\gamma$ extends to $[t,t+\delta\vphantom{(})$.
Consequently $|\dot\gamma(t_0)|>2$ for some $t_0\in[0,T\vphantom{(})$. Choose $t_0$ minimal
so $|\dot\gamma(t_0)|=2$ and thus $|\dot\gamma|\le 2$ on $[0,t_0]$.
The geodesic equation $\ddot\gamma+\Gamma(\dot\gamma,\dot\gamma)=0$ yields the estimate
$$|\ddot\gamma(t)|\le C_m|\Gamma|\cdot|\dot\gamma(t)|^2\le C_m\cdot\epsilon\cdot 4\,.$$
Consequently, if $\epsilon$ is small, we may conclude that $|\ddot\gamma|$ is small. Since $|\dot\gamma|\ge2$ and since $\dot\gamma$
is changing rather slowly, this implies $t_0$ must be relative large; in particular, for very small $\epsilon$, we must have $t_0\ge3$.
Again, as $\dot\gamma$ is changing slowly, we have $\dot\gamma(t)-\dot\gamma(0)$ must be small. In particular,
$$(\dot\gamma(t),\dot\gamma(0))\ge\textstyle\frac12(\dot\gamma(0),\dot\gamma(0))\ge\frac12\,.$$
Consequently $(\gamma(t),\dot\gamma(0))\ge\frac12t$. Taking $t_0=3$ then implies $|\gamma(t_0)|\ge\frac32$ and hence,
contrary to our assumption, $\gamma$
has exited from $B_\epsilon$. This contradiction establishes the Lemma.
\end{proof}\black

\section{Universal curvature identities}\label{sect-3}
In this section, we use Theorem~\ref{thm-1.2} to answer certain questions that were
raised previously in \cite{EPS4, LPS}.

\subsection{The identity of Berger}
The generalized Gauss-Bonnet theorem expresses a topological invariant (the Euler characteristic) as an
integral of an expression in curvature (the Euler form) in the category of compact Riemannian manifolds.
Berger \cite{Beg} used the associated Euler-Lagrange equations to derive a curvature identity on any 4-dimensional
compact oriented Riemannian manifold. Let $\check R$, $\check\rho$, and $L\rho$ be the symmetric $2$-tensors
whose components are given by:
$$
\check{R}_{ij}=\sum_{a,b,c}R_{abci}{R^{abc}}_{j},\quad
\check{\rho}_{ij}=\sum_a\rho_{ai}\rho^{a}_{~j},\quad
(L\rho)_{ij}=2\sum_{a,b}R_{iabj}\rho^{ab}.
$$
The identity of Berger is quadratic in the curvature; it may be stated as follows:
   \begin{equation}\label{eqn-3.a}
    \textstyle\frac{1}{4}(|R|^2-4|\rho|^2+\tau^2)g-\check{R}+2\check{\rho}+L\rho-\tau\rho=0.
    \end{equation}
Euh, Park, and Sekigawa \cite{EPSx,EPS2,EPS3} gave a direct proof that Equation~(\ref{eqn-3.a}) holds more generally
in the non-compact setting and provided some applications of the identity. There is a higher dimensional
generalization found by Kuz'mina \cite{K} and subsequently established using different
method by Labbi \cite{La1, La2, La}.
Gilkey, Park, and Sekigawa \cite{GPS} gave a different proof of this higher dimensional generalization using heat
trace method based on Weyl's invariance theory \cite{W23}. We also refer to related work of Euh, Jeong, and Park \cite{EJP}.
It follows from Theorem~\ref{thm-1.2} that any such identity which holds in the compact setting automatically extends
to the non-compact setting  thus answering directly a question
originally posed in \cite{EPS4, LPS}.

\subsection{Almost Hermitian geometry} Work of Gray \cite{Gr} and of Tricerri and Vanhecke \cite{TV81}
establishes several curvature identities in the context of almost
Hermitian geometry. The defining equations of Ricci $*$-tensor and
the $*$-scalar curvature are examples of such identities. Blair
\cite{Bl} discussed a similar problem for almost contact metric
manifolds. Gray and Hervella \cite{GH} examined identities related
to the K\"ahler form {{$\Omega$}}. We shall present an
application of Theorem~\ref{thm-1.2} in this setting. Let
$\rho^*=(\rho^*_{ij})$ and $\tau^*=g^{ij}\rho_{ij}^*$ be the Ricci
$*$-tensor and the $*$-scalar curvature of $M$, respectively. The
following identity holds on any 4-dimensional almost Hermitian
manifold $(M,g,J)$ \cite{Gr, GH}:
    \begin{equation}\label{eqn-3.b}
\textstyle    \rho^*_{ij}+\rho^*_{ji}-\rho_{ij}-J^{~a}_{i}J^{~b}_{j}\rho_{ab}-\frac{\tau^*-\tau}{2}g_{ij}=0.
    \end{equation}
It is easily checked that the equality in Equation~\eqref{eqn-3.b}
is trivial in the case where $M$ is K\"ahler since the following
identity holds on any K\"ahler manifold:
$$R_{ijkl}=R_{ijab}{J_k}^a{J_l}^b\,.$$
Let $M=(M,g,J)$  be a 4-dimensional compact
almost Hermitian manifold; we replace $J_-$ by $J$ to avoid unnecessary notational complexities.
Let $\chi(M)$, $p_1(M)$ and $c_1^2(M)$ be the Euler number of $M$, the
first Pontrjagin number of $M$, and the evaluation of the square of the first Chern form on $M$, respectively.
Then Wu's Theorem \cite{Wu} yields the relationship:
\begin{equation}\label{eqn-3.c}
        c_1^2(M)=2\chi(M)+p_1(M)\,.
\end{equation}

Euh, Park, and Sekigawa \cite{EPS4} derived the following
curvature identities on $M$ from the integral formula for the first
Pontrjagin number using Novikov's theorem \cite{N} provided that the underlying manifold
$M$ in question was compact:
\begin{equation}\label{eqn-3.d}
\begin{aligned}
T_{ij} = T_{ab}{J_i}^a{J_j}^b,\quad S_{ij} = S_{ab}{J_i}^a{J_j}^b,\quad
T_{ib}{J_j}^b + S_{ij} = 0\,.
\end{aligned}
\end{equation}
where
\medbreak\qquad
$T_{ij}:=\frac{1}{2}\big(T'_{ij}+T'_{ji}\big)$,\quad
$S_{ij}:=\frac{1}{2}\big(S'_{ij}-S'_{ji}\big)$,
\smallbreak\qquad
$T'_{ij}:=2\big({{\rho^*}^a}_{j}\rho^*_{ai}-\rho^*_{ja}{{\rho^*}_i}^a+{\rho^*}^{ab}R_{auvi}{J_b}^{u}{J_j}^{v}$
\smallbreak\qquad\quad
$+2\nabla_b\nabla_a({\rho^*}^{ac}{J_i}^b{J_{cj}})
+\frac{1}{2}{\rho^*}^{ab}\rho^*_{ab}g_{ij}\big)$
\smallbreak\qquad\quad
$4{J_j}^aJ^{ub}{R_{abk}}^l{R_{iul}}^k+4\nabla_b\nabla_a({J_u}^aJ_{vi}{{R^{uv}}_{j}}^b)$
\smallbreak\qquad\quad
$+\frac{1}{2}(J^{ua}J^{vb}{R_{abk}}^l{R_{uvl}}^k)g_{{ij}}$,
\medbreak\qquad
$S'_{ij}:=2{\rho^*}^{ab}(2J_{bj}\rho^*_{ai}-{J_b}^c{J_i}^u{J_j}^vR_{acuv})
-2(J^{ua}{R_{jak}}^l{R_{iul}}^k)$.
\medbreak
Since $\chi(M)$ and $p_1(M)$ are topological invariants, Equation \eqref{eqn-3.c} shows that
$c_1^2(M)$ also is a topological invariant. Using this observation, Lee, Park, and Sekigawa
\cite{LPS} derived the following curvature identities on $M$ from the
integral formula for $c_1^2(M)$ in terms of curvature
provided that the underlying manifold $M$ in question was compact:
\begin{equation}\label{eqn-3.e}
U_{ij} = U_{ab}{J_i}^a{J_j}^b,\quad V_{ij} = V_{ab}{J_i}^a{J_j}^b,\quad
U_{ib}{J_j}^b + V_{ij} = 0
\end{equation}
where
\medbreak\qquad
$U^{ij}:=\frac{1}{2}({U'}^{ij}+{U'}^{ji})$,
${{V_{ij}:=g_{ip}g_{jq}V^{pq}}}$,
$V^{pq}=\frac{1}{2}({V'}^{pq}{{-}}{V'}^{qp})$,
\medbreak\qquad
${U'}^{ij}:=J^{iv}{\rho^*_v}^w(\nabla_w{J_a}^b)(\nabla^j{J_u}^a){J_b}^u
+J^{wi}{\rho^*}^{jc}(\nabla_c{J_a}^b)(\nabla_w{J_u}^a){J_b}^u$
\smallbreak\qquad\quad
$+4\nabla_u(J^{cv}{\rho^*_v}^i(\nabla_cJ^{ju}))+\frac{1}{2}J^{cv}{R_{vws}}^iJ^{dw}J^{sj}(\nabla_d{J_a}^b)(\nabla_c{J_u}^a){J_b}^u$
\smallbreak\qquad\quad
$-\nabla_s\nabla_c(J^{dj}J^{si}{J_k}^c(\nabla^k{J_a}^b)(\nabla_d{J_u}^a){J_b}^u){{-3J^{cv}{\rho^*_v}^d(\nabla_d{J_a}^i)(\nabla_cJ^{jw}){J_w}^a}}$
\smallbreak\qquad\quad
$-\frac{1}{2}J^{cv}{\rho^*_v}^d(\nabla_d{J_a}^b)(\nabla_c{J_u}^a){J_b}^ug^{ij}-\frac{1}{2}{\rho^*}^{ij}(\nabla_c{J_a}^b)(\nabla_u{J_v}^a){J_b}^vJ^{cu}$
\smallbreak\qquad\quad
$+\frac{1}{2}\nabla_l\nabla_k(J^{ik}J^{jl}(\nabla_c{J_a}^b)(\nabla_u{J_v}^a){J_b}^vJ^{cu})+2\nabla_c(\tau^*(\nabla_uJ^{jc})J^{iu})$
\smallbreak\qquad\quad
${{+\frac{3}{2}\tau^*(\nabla_cJ^{jb})(\nabla_u{J_v}^i){J_b}^vJ^{cu}}}+\tau^*(\nabla^j{J_a}^b)(\nabla_u{J_v}^a){J_b}^vJ^{iu}$
\smallbreak\qquad\quad
$-\frac{1}{4}\tau^*(\nabla_c{J_a}^b)(\nabla_u{J_v}^a){J_b}^vJ^{cu}g^{ij}+{{2\rho^*_{lk}{{R^k}_{ab}}^iJ^{la}J^{bj}}}$
\smallbreak\qquad\quad
${{+4\nabla_a\nabla^l(\rho^*_{kl}J^{ai}J^{kj})}} -{\rho^*}^{ab}{{\rho^*_{ba}}}g^{ij}+2\tau^*{\rho^*}^{ij}$
\smallbreak\qquad\quad
$-2\nabla_b\nabla_a(\tau^*J^{ia}J^{jb})+\frac{1}{2}(\tau^*)^2g^{ij}$ and
\medbreak\qquad
${V'}^{pq}:=-J^{{{j}}v}J^{iq}{\rho^*_v}^p(\nabla_i{J_a}^b)(\nabla_j{J_u}^a){J_b}^u$
\smallbreak\qquad\quad
$+\frac{1}{2}J^{jv}J^{iw}J^{pc}J^{qd}R_{vwcd}(\nabla_i{J_a}^b)(\nabla_j{J_u}^a){J_b}^u$
\smallbreak\qquad\quad
$-2\nabla_i(J^{jv}{\rho^*_v}^i(\nabla_j{J_w}^p)J^{qw})+{\rho^*}^{qi}(\nabla_i{J_a}^b)(\nabla^p{J_u}^a){J_b}^u$
\smallbreak\qquad\quad
$+J^{{{j}}v}{\rho^*_v}^i(\nabla_i{J_a}^p)(\nabla_jJ^{qa})$
\smallbreak\qquad\quad
$+J^{ip}{\rho^*_i}^q(\nabla_c{J_a}^b)(\nabla_u{J_v}^a){J_b}^vJ^{cu}+\frac{1}{2}\tau^*(\nabla_c{J_a}^p)(\nabla_uJ^{qa})J^{cu}$
\smallbreak\qquad\quad
$+\frac{1}{2}\tau^*(\nabla^p{J_a}^b)(\nabla^q{J_v}^a){J_b}^v-\nabla_u(\tau^*(\nabla_cJ^{qb}){J_b}^pJ^{cu})-4J^{lq}\rho^*_{lk}{\rho^*}^{kp}$
\smallbreak\qquad\quad
$+{{2\rho^*_{lk}J^{lc}J^{pa}J^{qb}{R^k}_{cab}}}-4\tau^*J^{ap}{\rho^*_a}^q$.
\medbreak\noindent
Then, from the Theorem~\ref{thm-1.2}, it follows that Equations \eqref{eqn-3.d}
 and {{\eqref{eqn-3.e}}} also hold on any
4-dimensional almost Hermitian manifold which is not necessarily compact. This answers affirmatively
questions posed in \cite{EPS4,LPS}. In particular, if $M=(M,g,J)$  is a 4-dimensional
K\"ahler manifold one has the following curvature identities \cite{EPS4, LPS}:
\begin{eqnarray*}
&&\textstyle 2\check{\rho} - \tau{\rho}-\frac{1}{2}\Big(|\rho|^2-\frac{\tau^2}{2}\Big)g=0,\text{ and }
\textstyle 2L\rho-
8\check{\rho}+2\tau\rho+\frac{1}{2}\Big(2|\rho|^2-{{{\tau^2}}}\Big)g=0.
\end{eqnarray*}

\section{Realizing curvature tensors}\label{sect-4}
In this section, we present two examples to illustrate the use of Theorem~\ref{thm-1.3} to solve problems involving geometric realizability.
In Section~\ref{sect-4.1}, we study pseudo-Riemannian curvature models and in
Section~\ref{sect-4.2}, we study para-K\"ahler curvature models. We emphasize that
each of the contexts we have examined have results of a similar nature but we have limited ourselves to these two examples in the interests of
brevity; in particular, the K\"ahler setting is completely analogous.

\subsection{Riemannian curvature models}\label{sect-4.1}
The Riemann curvature tensor satisfies the following symmetries
\begin{eqnarray}
&&A(x,y,z,w)+A(y,x,z,w)=0,\nonumber\\
&&A(x,y,z,w)+A(y,z,x,w)+A(z,x,y,w)=0,\label{eqn-4.a}\\
&&A(x,y,z,w)+A(x,y,w,z)=0.\nonumber
\end{eqnarray}
We say that a triple $(V,\langle\cdot,\cdot\rangle,A)$ is a {\it pseudo-Riemannian curvature model} of signature $(p,q)$ if $(V,\langle\cdot,\cdot\rangle)$
is an inner product space of signature $(p,q)$ and if $A\in\otimes^4V^*$ satisfies the universal curvature symmetries given above. We say
that two such models $(V_i,\langle\cdot,\cdot\rangle_i,A_i)$ are {\it isomorphic} if there is a linear map $\phi$ from $V_1$ to $V_2$ so that $\phi^*A_2=A_1$
and so that $\phi^*\langle\cdot,\cdot\rangle_2=\langle\cdot,\cdot\rangle_1$. The following theorem shows that the universal symmetries given in Equation~(\ref{eqn-4.a}) generate all the curvature symmetries. There are no additional ``hidden" symmetries and
any other universal identity is an algebraic consequence of these identities:

\begin{theorem}\label{thm-4.1}
Let $\mathfrak{M}=(V,\langle\cdot,\cdot\rangle,A)$ be a pseudo-Riemannian curvature model of signature $(p,q)$. Let
$P$ be a point of a pseudo-Riemannian manifold $(M,g)$ of signature $(p,q)$.
Let $\varepsilon>0$ be given. There exists a pseudo-Riemannian
metric $\tilde g$ on $M$ so that $g=\tilde g$ away from $P$ and so that $(T_PM,g_P,R_P)$ is isomorphic to $\mathfrak{M}$.
\end{theorem}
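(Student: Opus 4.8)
The plan is to reduce the theorem to two ingredients: the classical fact that every algebraic curvature model is geometrically realized by the germ of a pseudo-Riemannian metric at a single point, and the transplantation result Theorem~\ref{thm-2.2}. Since $(M,g)$ is already a pseudo-Riemannian manifold of signature $(p,q)$, the hypothesis of Theorem~\ref{thm-2.2} that $M$ carry a structure of the required type is automatically satisfied; so the only genuinely new input is the germ-level realization, and essentially all of the (modest) work sits there.

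First I would produce the germ. Choose a basis $\{e_1,\dots,e_m\}$ of $V$ with $\langle e_i,e_j\rangle=0$ for $i\ne j$, with $\langle e_i,e_i\rangle=-1$ for $i\le p$, and with $\langle e_i,e_i\rangle=+1$ for $i>p$; identify $V$ with $\mathbb{R}^m$ and transport the components $A_{ikjl}$ of $A$ accordingly. On a small ball $B_{r_0}\subset\mathbb{R}^m$ define a symmetric $2$-tensor
$$g_{1,ij}(x):=\langle e_i,e_j\rangle+c\sum_{k,l}A_{ikjl}\,x^kx^l,$$
where $c$ is the universal constant coming from the inversion of the geodesic-normal-coordinate expansion of a metric in terms of its curvature. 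For $r_0$ sufficiently small $g_1$ is non-degenerate of signature $(p,q)$; its first derivatives vanish at $0$; and a direct computation of $R_{g_1}(0)$ — in which the first Bianchi identity and the remaining symmetries of Equation~(\ref{eqn-4.a}) are used precisely to recover the full tensor $A$ from the $(k,l)$-symmetrized data visible in $g_1$ — shows $R_{g_1}(0)=A$. Hence the curvature model $(T_0\mathbb{R}^m,g_{1,0},R_{g_1}(0))$ is isomorphic to $\mathfrak{M}$; see \cite{BGN12} for this and for sharper germ-realization statements.

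Next I would transplant. Apply Theorem~\ref{thm-2.2} with $M_1=B_{r_0}$, $P_1=0$, $g_1$ the germ just built, $M_2=M$, $P_2=P$, $g_2=g$, and (say) $\varepsilon=1$. This produces a pseudo-Riemannian metric $\tilde g:=\tilde g_2$ of signature $(p,q)$ on $M$ that agrees with $g$ away from $P$ and that, in suitable coordinates centered at $P$, coincides with $g_1$ on a small ball about $P$. Since the curvature tensor is a local invariant, $(T_PM,\tilde g_P,R_{\tilde g}(P))$ is isomorphic to $(T_0\mathbb{R}^m,g_{1,0},R_{g_1}(0))$, hence to $\mathfrak{M}$, which is exactly the assertion. (Choosing for $M$ a torus equipped with a flat metric of signature $(p,q)$ specializes this argument to Theorem~\ref{thm-1.3}.)

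The only real obstacle is the curvature computation in the middle paragraph: one must verify that the explicit quadratic Taylor polynomial above has curvature exactly $A$ at the origin, and it is here — and only here — that the hypothesis that $A$ satisfies the symmetries of Equation~(\ref{eqn-4.a}) is used. This verification is routine (it is the standard inversion of the expansion $g_{ij}=\langle e_i,e_j\rangle+cR_{ikjl}x^kx^l+O(|x|^3)$), and the heavy analytic lifting — the cutoff gluing and the non-degeneracy of the spliced metric — has already been carried out once and for all in Theorem~\ref{thm-2.2}.
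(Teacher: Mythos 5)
Your proposal is correct and follows essentially the same route as the paper: both construct the germ $g_{ij}=\varepsilon_{ij}-\tfrac13A_{ikjl}x^kx^l$ (your unspecified universal constant $c$ is $-\tfrac13$ in the paper's index convention), verify via the symmetries of Equation~(\ref{eqn-4.a}) that the curvature at the origin recovers $A$, and then invoke Theorem~\ref{thm-2.2} to transplant the germ into $M$.
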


\begin{proof}
Fix a basis $\{e_i\}$ for $V$. Let
$$\varepsilon_{ij}:=\langle e_i,e_j\rangle\text{ and }A_{ijkl}:=A(e_i,e_j,e_k,e_l)$$
give the structure constants of the model. Define:
$$
\textstyle g_{ik}:=\varepsilon_{ik}-\frac13A_{ijlk}x^jx^l\,.
$$
Since $\varepsilon$ is non-degenerate with signature $(p,q)$, this is the germ of a pseudo-Riemannian
metric on $(V,0)$. One uses the curvature symmetries to compute $R_{ijkl}(0)=A_{ijkl}$ (see the discussion in \cite{BGN12} for example).
We then use Theorem~\ref{thm-2.2} to transplant this structure into $M$.
\end{proof}

\subsection{Para-K\"ahler curvature models}\label{sect-4.2}
 If $(M,g,J_+)$ is a para-K\"ahler manifold, then the curvature tensor satisfies an additional symmetry
\begin{equation}\label{eqn-4.b}
R(x,y,z,w)=-R(x,y,J_+ z,J_+ w)\,.
\end{equation}
We say that a quadruple $(V,\langle\cdot,\cdot\rangle,J_+,A)$ is a
{\it para-K\"ahler curvature model} of signature $(p,q)$
if $(V,\langle\cdot,\cdot\rangle)$ is an inner product space of signature $(p,q)$
with a para-complex structure $J_+$ such that
 $J_+^*\langle\cdot,\cdot\rangle=-\langle\cdot,\cdot\rangle$, and if $A\in\otimes^4V^*$ satisfies the symmetries of Equation~(\ref{eqn-4.a})
and Equation~(\ref{eqn-4.b}). The following theorem shows that
the universal symmetries given in Equation~(\ref{eqn-4.a}) and in
Equation~(\ref{eqn-4.b}) generate all the curvature symmetries in the
para-K\"ahler setting. As in the pseudo-Riemannian
setting, there are no additional ``hidden" symmetries and
 any other universal identity is
an algebraic consequence of these identities:

\begin{theorem}\label{thm-4.2} Let $\mathfrak{M}=(V,\langle\cdot,\cdot\rangle,J_+,A)$
be a para-K\"ahler curvature model of signature $(p,q)$. Let $P$ be a point of a
para-K\"ahler manifold $(M,g,J_+)$ of signature $(p,q)$.
There exists a para-K\"ahler metric $\tilde g$ on $(M,J_+)$
so that $\tilde g=g$ away from $P$ and so that $(T_PM,g_P,J_+,R_P)$ is isomorphic to $\mathfrak{M}$.
\end{theorem}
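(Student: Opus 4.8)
The plan is to mimic the proof of Theorem~\ref{thm-4.1} but carry the extra para-K\"ahler symmetry (\ref{eqn-4.b}) through the construction, and then appeal to the transplantation result for para-K\"ahler structures, namely Theorem~\ref{thm-2.7}, rather than to Theorem~\ref{thm-2.2}. First I would choose a basis $\{e_i\}$ for $V$ adapted to $J_+$ in the standard way: $e_{i+\bar m}=J_+e_i$ for $1\le i\le\bar m$, with $\langle e_i,e_j\rangle=\varepsilon_{ij}$ normalized as in Equation~(\ref{eqn-2.e}) (neutral signature $(\bar m,\bar m)$, which is forced since $J_+^*\langle\cdot,\cdot\rangle=-\langle\cdot,\cdot\rangle$). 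Using these coordinates on $(V,0)$ one has a flat para-K\"ahler structure $(g_2,J_+)$ given by $g_{2,ij}=\varepsilon_{ij}$ and the constant $J_+$; this will play the role of the ambient structure locally. I then want to write down a germ of a para-K\"ahler metric $g_1$ near $0$ whose curvature at the origin is exactly $A$ and whose first derivatives vanish at $0$.

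The natural candidate is the para-K\"ahler potential. In holomorphic-type para-complex coordinates one expects a metric of the form $g_{i\bar j}=\partial_i\partial_{\bar j}F$ for a potential $F$; the point is that a quartic potential $F$ with prescribed fourth-order Taylor coefficients produces a K\"ahler metric with prescribed curvature at the origin, exactly as a quadratic expression $\varepsilon_{ik}-\frac13A_{ijlk}x^jx^l$ does in the Riemannian case. Concretely, the key algebraic step is to solve for the homogeneous degree-four part $F_4$ of the potential so that $R_{ijkl}(0)=A_{ijkl}$; this uses that $A$ satisfies both (\ref{eqn-4.a}) and (\ref{eqn-4.b}), the latter guaranteeing that $A$ has the right ``type'' (it is a section of the appropriate sub-bundle) to be realized by a para-K\"ahler curvature, so that the linear system for $F_4$ is solvable. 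By construction $g_1$ is para-Hermitian with respect to $J_+$ and, being derived from a potential, satisfies $d\Omega_{+,g_1}=0$, hence is para-K\"ahler by Proposition~\ref{prop-2.1}(3); moreover the first derivatives of $g_1$ vanish at $0$ since $F$ may be taken with vanishing Taylor terms through degree three.

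Having produced the germ $(g_1,J_+)$ of a para-K\"ahler structure at $0$ with $R_{g_1}(0)=A$ and $g_1(0)=\langle\cdot,\cdot\rangle$ (and $J_+$ already in normal form), I would invoke Theorem~\ref{thm-2.7} to transplant $(g_1,J_+)$ into $(M,g,J_+)$: there is a para-K\"ahler metric $\tilde g$ on $M$ with $\tilde g=g$ off a neighborhood of $P$ and $(\tilde g,J_+)$ isomorphic to $(g_1,J_+)$ near $P$. Then $(T_PM,\tilde g_P,J_+,R_{\tilde g,P})\cong(V,\langle\cdot,\cdot\rangle,J_+,A)=\mathfrak M$, which is exactly the claim; note that since the isomorphism near $P$ is a local diffeomorphism intertwining the para-complex structures, it carries the curvature tensor at $P$ to $A$.

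I expect the main obstacle to be the algebraic solvability step: checking that the symmetries (\ref{eqn-4.a}) together with (\ref{eqn-4.b}) are precisely what is needed to realize $A$ as $R_{g_1}(0)$ by a potential-type metric, i.e.\ that the map from quartic potentials to curvature tensors at the origin surjects onto the space of tensors satisfying these symmetries. In the Riemannian case this is the standard identity $R_{ijkl}(0)=A_{ijkl}$ for $g_{ik}=\varepsilon_{ik}-\frac13A_{ijlk}x^jx^l$; here one must verify the para-K\"ahler analogue, which amounts to a representation-theoretic count (the space of para-K\"ahler curvature tensors matches the image of the symmetrized fourth derivatives of the potential). Everything else — that the potential construction yields a genuine para-K\"ahler germ, that first derivatives vanish, and the final transplant — is either immediate from Proposition~\ref{prop-2.1} or a direct citation of Theorem~\ref{thm-2.7}. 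One can also simply cite \cite{BGN12} for the realization of para-K\"ahler curvature models by germs, reducing the whole proof to that citation plus Theorem~\ref{thm-2.7}.
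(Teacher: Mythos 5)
Your overall architecture is the same as the paper's: build a germ of a para-K\"ahler structure at the origin whose curvature at $0$ is $A$, then transplant it into $(M,g,J_+)$ via Theorem~\ref{thm-2.7}. Where you diverge is in the germ construction, and that is also where the gap lies. The paper does not use a quartic potential. It perturbs the flat metric at second order, $g_\theta=(\varepsilon_{ij}+\theta_{ijkl}x^kx^l)dx^i\circ dx^j$ with $\theta\in S^2_-\otimes S^2$, identifies the closedness of the K\"ahler form with the vanishing of an explicit operator $\mathcal{K}_+(\theta)$, computes the curvature at $0$ by the explicit formula $\mathcal{R}(\theta)(x,y,z,w)=\theta(x,z,y,w)+\theta(y,w,x,z)-\theta(x,w,y,z)-\theta(y,z,x,w)$, and then proves surjectivity of $\mathcal{R}:\mathfrak{K}_+\to\mathfrak{K}_{+,R}$ by hand: it writes down a nine-element spanning set of $\mathfrak{K}_{+,R}$ (for $m=4$), exhibits the single element $\xi^{1313}=\mathcal{R}(\theta)$ for an explicit product-type $\theta$, and then sweeps out the remaining generators by acting with the algebra $\operatorname{End}_+$ of endomorphisms commuting with $J_+$, using that the range of $\mathcal{R}$ is an $\operatorname{End}_+$-submodule and is closed, so one may divide by the deformation parameter and pass to the limit.

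The step you label ``the main obstacle'' --- that the map from quartic potentials (equivalently, from admissible second-order metric jets) onto the space of tensors satisfying Equations~(\ref{eqn-4.a}) and (\ref{eqn-4.b}) is surjective --- is precisely the content of the paper's proof, and you assert it rather than prove it. Saying it ``amounts to a representation-theoretic count'' is not yet an argument: one must either carry out the dimension count (and check injectivity modulo the obvious kernel) or produce generators explicitly as the paper does. Your potential-based route is viable and arguably cleaner --- a metric of the form $d_+d_-F$ with $F=F_2+F_4$ is automatically para-K\"ahler with vanishing first derivatives at $0$, whereas the paper must impose $\theta\in\ker\mathcal{K}_+$ separately --- but as written the proposal omits the mathematical heart of the theorem. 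Your fallback of citing \cite{BGN12} (or \cite{BGM10}) for the germ realization is legitimate in the context of this paper, which itself leans on those references; but then the ``proof'' reduces entirely to that citation plus Theorem~\ref{thm-2.7}, which is weaker than what the paper actually supplies.
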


\begin{proof} We follow the treatment in \cite{GGNV13}
to construct the germ of a suitable structure and refer to
\cite{BGM10,BGMR10} for a different treatment.
We will then use Theorem~\ref{thm-2.7} to transplant this
structure into $M$.
To simplify the notation, we assume
that $m=4$; the general case is then easily handled.
We adopt the notation of Equation~(\ref{eqn-2.g}) on the algebraic level;
$\langle\cdot,\cdot\rangle\in S_-^2(V)$. Choose a basis $\{e_i\}$ for $V$
and let $\varepsilon_{ij}=\langle e_i,e_j\rangle$.
Extend $J_+$ to an integrable para-complex structure on
$T(V)$ by identifying the tangent bundle $T(V)$ with the trivial bundle $V\times V$ over $V$:
$$\begin{array}{llll}
J_+\partial_{x_1}=\partial_{x_1},&J_+\partial_{x_2}=\partial_{x_2},&
J_+\partial_{x_3}=-\partial_{x_3},&J_+\partial_{x_4}=-\partial_{x_4},\\
J_+^* dx^1=dx^1,& J_+^* dx^2=dx^2,& J_+^* dx^3=-dx^3,& J_+^* dx^4=-dx^4\,.
\vphantom{\vrule height 12pt}\end{array}$$

Let $\theta\in S^2_-\otimes S^2$. Set:
$$g_\theta=(\varepsilon_{ij}+\theta_{ijkl}x^kx^l)dx^i\circ dx^j\,.$$
Since $g_\theta(0)=\langle\cdot,\cdot\rangle$,
$g_\theta$ is non-degenerate near $0$ and defines the germ
of a pseudo-Riemannian metric at $0\in V$. Since $\theta\in S^2_-\otimes S^2$ and
since $J_+^*\varepsilon=-\varepsilon$,
$J^*g_\theta=-g_\theta$. Thus $(g_\theta,J_+)$ defines the germ of a para-Hermitian manifold. Set:
$$
\mathcal{K}_+(\theta)(x,y,z):=2\{\theta(x,J_+ y,z,e_l)+\theta(y,J_+ z,x,e_l)
+\theta(z,J_+ x,y,e_l)\}x^l\,.
$$
It then an easy calculation to show that the exterior derivative of the K\"ahler form is given by
$$d\Omega_+^{g_\theta}(x,y,z)=2\mathcal{K}_+(\theta)(x,y,z)\,.$$
Let $\mathfrak{K}_+:=\ker(\mathcal{K}_+)\cap S_-^2\otimes S^2$.
It is then immediate that if $\theta\in\mathfrak{K}_+$, then $g_\theta$ is a para-K\"ahler metric.
Let $\mathcal{R}(\theta)$ be the curvature tensor of the Levi-Civita connection of $g_\theta$ at $0$. It is an easy calculation to verify
\begin{equation}\label{eqn-4.c}
\mathcal{R}(\theta)(x,y,z,w)=\theta(x,z,y,w)+\theta(y,w,x,z)-\theta(x,w,y,z)-\theta(y,z,x,w)\,.
\end{equation}
Let $\mathfrak{K}_{+,R}\subset\otimes^4V^*$ be the set of all tensors satisfying the symmetries of Equation~(\ref{eqn-4.a}) and
Equation~(\ref{eqn-4.b}). We will show that every
para-K\"ahler curvature model of signature $(p,q)$ can be realized by the germ of a
K\"ahler structure of signature $(p,q)$ by showing that
$$\mathcal{R}:\mathfrak{K}_+\rightarrow\mathfrak{K}_{+,R}\rightarrow0\,.$$

The Riemann curvature tensor also satisfies $R(x,y,z,w)=R(z,w,x,y)$.
Thus by Theorem~\ref{thm-4.1}, this symmetry
is an algebraic consequence of the other symmetries in Equation~(\ref{eqn-4.a});
this can also, of course, be verified combinatorially --
see, for example, the discussion in \cite{BGNS06}.
This permits us to regard $R\in S^2(\Lambda^2(V^*))$.
Set $\xi^{ijkl}:=dx^i\wedge dx^j\wedge dx^k\wedge dx^l$.
We use Equation~(\ref{eqn-4.b})
to see $\mathfrak{K}_{+,R}\subset S^2(\Lambda_+^2)$. After taking into account the Bianchi identity, we have:
\begin{eqnarray*}
&&\mathfrak{K}_{+,\mathfrak{R}}\subset\operatorname{Span}
\{\xi^{1313},\ \xi^{1414},\ \xi^{2323},\ \xi^{2424},\ \xi^{1323},\ \xi^{1424},\ \xi^{1314},\ \xi^{2324},\\
&&\phantom{\mathfrak{K}_{+,\mathfrak{R}}\subset\operatorname{Span}\ }\xi^{1324}+\xi^{1423}\}\,.
\end{eqnarray*}
\medbreak\noindent There are $9$ elements in this basis; this is in accordance with the
dimension count given by \cite{BGN12} in the para-complex setting.

We consider the following example. Let
$$
\langle\partial_{x_1},\partial_{x_3}\rangle=\langle\partial_{x_2},\partial_{x_4}\rangle=1\text{ and }
\theta=\textstyle\frac12(dx^1\circ dx^3)\otimes (dx^1\circ dx^3)\in S_-^2\otimes S^2\,;
$$
 $g_\theta$ is a para-K\"ahler metric which takes the form
$M_1\times M_2$ where $M_1$ is a
para-Riemann surface and
where $M_2$ is flat; we can also verify directly
that the para-K\"ahler form vanishes since $\mathcal{K}_+(\theta)$ is a $3$-form
which is supported on $\operatorname{Span}\{\partial_{x_1},\partial_{x_3}\}$. Furthermore, we use Equation~(\ref{eqn-4.c}) to see
\begin{equation}\label{eqn-4.d}
\mathcal{R}(\theta)=\xi^{1313}\in\operatorname{Range}(\mathcal{K}_+)\,.
\end{equation}
There is an algebra acting which will be central to our treatment. Let
$$\operatorname{End}_+=\{T\in\operatorname{End}(V):TJ_+=J_+ T\}\,.$$
The vector spaces $\mathfrak{K}_{+,R}$ and $\mathfrak{K}_+$ are
modules over $\operatorname{End}_+$ and
$\mathcal{R}:\mathfrak{K}_+\rightarrow\mathfrak{K}_{+,R}$ is an
 $\operatorname{End}_+$ module morphism.
Define $T\in\operatorname{End}_+$ by setting:
$$
T(dx^1)=dx^1+adx^2,\quad T(dx^2)=dx^2,\quad T(dx^3)=dx^3+\tilde adx^4,\quad T(dx^4)=\xi^4$$
where $a,\tilde a\in\mathbb{R}$. We apply $T$ to the
element of Equation~(\ref{eqn-4.d}) to conclude
\begin{eqnarray}
&&T(\xi^{1313})=
\xi^{1313}+2a\xi^{1323}+2\tilde a\xi^{1314}+a^2\xi^{2323}+\tilde a^2\xi^{1414}
+2a\tilde a(\xi^{1324}+\xi^{1423})\nonumber\\
&&\qquad+2a^2\tilde a\xi^{2324}+2a\tilde a^2\xi^{1424}+2a^2\tilde a^2\xi^{2424}
\in\operatorname{Range}\{\mathcal{K}_+\}\,.\label{XFE4.5c}
\end{eqnarray}
Let $\tilde a=\pm a$. Since $\xi^{1313}\in\operatorname{Range}\{\mathcal{K}_\pm\}$,  we have
$$2a\xi^{2313}\pm 2a\xi^{1314}+O(a^2)\in\operatorname{Range}\{\mathcal{K}_+\}$$ so
after dividing by $a$, we have:
$$
\xi^{2313}+O(a)\in\operatorname{Range}\{\mathcal{K}_+\}\text{ and }
\xi^{1314}+O(a)\in\operatorname{Range}\{\mathcal{K}_+\}\,.
$$
Since $\operatorname{Range}\{\mathcal{K}_+\}$ is a linear subspace of a finite dimensional vector space, it
is closed. Thus we may take the limit as $a\rightarrow0$ to conclude
$$\xi^{1323}\in\operatorname{Range}\{\mathcal{K}_+\}\text{ and }
\xi^{1314}\in\operatorname{Range}\{\mathcal{K}_+\}\,.
$$
Permuting the indices $1\leftrightarrow2$ and $3\leftrightarrow 4$ defines an element of $\operatorname{End}_\pm$ and shows
$$\xi^{2424}\in\operatorname{Range}\{\mathcal{K}_\pm\},\quad
    \xi^{1424}\in\operatorname{Range}\{\mathcal{K}_\pm\},\quad
    \xi^{2324}\in\operatorname{Range}\{\mathcal{K}_\pm\}\,.
 $$
 Since $\{a,\tilde a\}$ are arbitrary, it follows easily that
\begin{eqnarray*}
&&\{\xi^{1313},\ \xi^{1414},\ \xi^{2323},\ \xi^{2424},\ \xi^{1323},\ \xi^{1424},\ \xi^{1314},\ \xi^{2324},\\
&&\phantom\{\xi^{1324}+\xi^{1423}\}\subset\operatorname{Range}\{\mathcal{R}\}\,.
\end{eqnarray*}
This shows that it is possible to geometrically realize every
para-K\"ahler curvature model of signature $(p,q)$ as the germ
of a para-K\"ahler manifold of signature $(p,q)$.
Theorem~\ref{thm-4.2} now follows from Theorem~\ref{thm-2.7}.
\end{proof}

\section*{Acknowledgements}
\noindent This work was supported by the National Research
Foundation of Korea (NRF) grant funded by the Korea government
(MEST) (2012-0005282). It was also supported by project MTM2009-07756
(Spain). This article is dedicated to the memory of our friend and colleague E. Merino - may he rest in peace.

\end{document}